\newtheorem{theorem}{Theorem}[section]
\newtheorem{lemma}[theorem]{Lemma}
\newtheorem{conjecture}[theorem]{Conjecture}
\theoremstyle{definition}
\newtheorem{definition}[theorem]{Definition}
\newtheorem{remark}[theorem]{Remark}
\newtheorem{example}[theorem]{Example}
\newtheorem{construction}[theorem]{Construction}
\definecolor{uicred}{RGB}{228,78,97}
\definecolor{uicblue}{RGB}{0,179,230}
\definecolor{uicdblue}{RGB}{20,95,170}
\newcommand\bdwid{1pt}    % width of lines
\newcommand\bdwidb{4pt}   % background for width of lines
\newsavebox{\twop}
\savebox{\twop}{          % two points simplicial complex
  \begin{tikzpicture}[baseline=-3pt]
  \fill (0,0) circle (.05);
  \fill (.3,0) circle (.05);
  \end{tikzpicture}
}
\newsavebox{\twopconn}
\savebox{\twopconn}{      % two connected points simplicial complex
  \begin{tikzpicture}[baseline=-3pt]
  \draw[uicred,line width=1pt] (0,0)--(.3,0);
  \fill (0,0) circle (.05);
  \fill (.3,0) circle (.05);
  \end{tikzpicture}
}
\newcommand\C{\mathbf{C}}            % Complex numbers
\newcommand\Cech{\textsf{\v Cech}}      % Cech functor
\newcommand\Conf{\textup{Conf}}      % Configuration space
\newcommand\FSC{\mathsf{FSC}}        % Set of frontier simplicial complexes
\newcommand\Hom{\textup{Hom}}        % Arrows in a category
\newcommand\im{\textup{im}}          % image of a function
\newcommand\R{\mathbf{R}}            % Real numbers
\newcommand\Ran{\textup{Ran}}        % Ran space
\newcommand\SC{\mathsf{SC}}          % Set of simplicial complexes
\newcommand\SCcat{\textup{SurjSC}}   % Category of simplicial complexes
\newcommand\Sing{\textsf{Sing}}      % Singular space
\newcommand\Z{\mathbf{Z}}            % Integers
\renewcommand\epsilon\varepsilon
\begin{document}

\title{Stratifications on the Ran space }

\author{J\=anis Lazovskis\thanks{Department of Mathematics, University of Latvia, Rīga LV-1004, Latvia. \texttt{janis.lazovskis@lu.lv}}}

\date{\today}

\maketitle

\begin{abstract}
We describe a partial order on finite simplicial complexes. This partial order provides a poset stratification of the product of the Ran space of a metric space and the nonnegative real numbers, through the \v Cech simplicial complex. We show that paths in this product space respecting its stratification induce simplicial maps between the endpoints of the path.

\end{abstract}

\section{Introduction}
\label{sec:intro}

The Ran space of a topological space $X$ is, as a set, all finite subsets of $X$, endowed with a topology to allow points on $X$ to split and nearby points to merge \cite{MR2058353}. The Ran space decomposes into configuration spaces, where the size of the subset of $X$ does not change. Configuration spaces have been applied \cite{2018arXiv180311165K} to study the topology of the space $X$, and the Ran space has been used \cite{MR3941460,lurie-ha} to make vast generalizations about $\infty$-categories.

Stratifications were originally \cite{MR0188486} meant to generalize smooth manifolds, but have since become \cite{MR1308714,lurie-ha} a broader tool to decompose topological spaces with respect to a poset. Often stratifications are required to be conical \cite{MR3590534}, meaning every point has a neighborhood that looks like a stratified cone. The Ran space has a natural stratification into its constituent configuration spaces, and we are interested in refining this by not only considering the number of points in a subset of $X$, but also the distance among the points.

This approach takes us directly to persistent homology \cite{MR2476414}, which combines a topological perspective at a range of distances. The topological features that are present for longer are considered essential to the topology of the underlying space, a perspective that has proven to be useful in a wide range of applications \cite{SPBG:SPBG07:091-100,doi:10.1177/0278364914548051,10.1371/journal.pone.0126383}. The most common product of persistent homology is the persistence diagram, which provides a clean way to convey observations \cite{MR2121296}. Topological spaces based on persistent homology have been studied \cite{MR3927353}, with a notion of distance coming from comparing such persistence diagrams.

\subsection{Motivation}
\label{sec:motiv}

Persistent homology uses filtrations of algebraic objects \cite{2017arXiv170501690B}, most often simplicial complexes, to produce persistence diagrams. Simplicial complexes have been used \cite{MR2383768,MR3871057} to probe the topology of the underlying space. A key aim of this exposition is to combine the filtration of a particular simplicial complex with the different choices of simplicial complexes that can be made by sampling a space. With such a combination, we are motivated to answer the following questions:
\begin{itemize}
\item If there is a path in from one finite sample of $M$ to another:
\begin{itemize}
\item can the changes in filtrations between these samples be precisely described?
\item can the persistent homology computation of a new sample be simplified by using the results of a different sample?
\end{itemize}
\item Can we construct a space of all possible persistence diagrams by keeping track of homological changes of the simplicial complex coming from a sample of $M$ and a distance $r$, as both change?
\end{itemize}

These questions, some of which have been already considered \cite{MR3333456}, would be greatly helped along if $\Ran(M)\times \R_{\geqslant 0}$ were stratified. The first component in the product is the choice of a finite sample of $M$, and the second component is the persistent homology distance parameter. 

A stratification needs a poset, so we define a novel partial order on isomorphism classes of simplicial complexes in Definition \ref{sc-order}. This poset $[\SC]$ prompts in own questions:
\begin{itemize}
\item What are the order relations on this poset?
\item Is the order complex of this poset shellable? 
\item Are there subposets of $[\SC]$ that are not shellable?
\item Does there exist an oriented coloring on $[\SC]$?
\end{itemize}
This is the first introduction, to our knowledge, of this poset.

\subsection{Overview}

The scope of the present work is to stratify $\Ran(M)\times \R_{\geqslant 0}$. Fix $M$ to be a metric space, and to a finite subset $P\subseteq M$, build the \v Cech complex. This simplicial complex has $P'\subseteq P$ defining a $(|P'|-1)$-simplex whenever the intersection of (closed) $r$-balls around the points of $P'$ is non-empty. We denote by $\check C$ this assignment of a simplicial complex to a pair $(P,r)\in \Ran(M)\times \R_{\geqslant 0}$ of a sample $P$ of $M$ and a nonnegative distance $r$.

Our contributions are first in the introduction of a partial order $\succcurlyeq$ on isomorphism classes of simplicial complexes $[\SC]$ in Section \ref{sec:scs}. We also introduce the concept of a ``frontier simplicial complex'' that refines the notion of a simplicial complex in Section \ref{sec:stratran}. The results on $[\SC]$ extend to results on $[\FSC]$, the poset of isomorphism clasases of frontier simplicial complexes.

Using the poset $[\SC]$ as a stratifying set, we prove our main results:
\begin{itemize}
\item (Theorem \ref{thm:cech-continuous}) $\Ran(M)\times \R_{\geqslant 0}$ is $[\SC]$-stratified
\item (Theorem \ref{thm:conical-existence}) $\Ran(M)\times \R_{\geqslant 0}$ is conically stratified if $M$ is semialgebraic
\end{itemize}
These results are interpreted in the context of out persistent homology motivation to relate the filtrations of different samples of $M$. In Section \ref{sec:strat-paths} we observe that the \v Cech filtration arising of a finite subset of $M$ is always subposet of $[\SC]$. Finally, we show in Construction \ref{constr:entrance-path} that every path $\gamma$ in $\Ran(M)\times \R_{\geqslant 0}$ that respects its stratification induces a unique simplicial map $\check C(\gamma(0))\to \check C(\gamma(1))$.

\section{Background}
\label{sec:background}

Let $\SC$ be the set of finite, abstract simplicial complexes\footnote{All simplicial complexes will be abstract and finite unless otherwise noted, so we drop the adjectives.}. A simplicial complex $C$ is defined by its \emph{vertices} and \emph{simplices}, that is, a pair of sets $(V(C),S(C))$ with $S(C)\subseteq P(V(C))$ closed under taking subsets.

\subsection{Topological spaces}
\label{sec:top-spac}

Let $X$ be a topological space. The \emph{Ran space} of $X$ is $\Ran(X)\colonequals \{P\subseteq X\ :\ 0<|P|<\infty\}$, with topology defined as the coarsest\footnote{Used in the sense that all other topologies satisfying the condition contain at least the same open sets as the given topology.} for which all maps $X^I\to \Ran(X)$ are continuous for every nonempty finite set $I$ \cite[Section 3.4.1]{MR2058353}. This condition may be equivalently stated in terms of images of open sets through them map $X^I\to \Ran(X)$ \cite[Definition 5.5.1.2]{lurie-ha}, and once a metric has been chosen on $X$, is equivalent to the Hausdorff distance on subsets of $X$ \cite[Remark 5.5.1.5]{lurie-ha}.

Let $M$ be a metric space with metric $d$. For a positive integer $n$, write $\Conf_n(M)$ and $\Ran_{\leqslant n}(M)$ for the subspaces of $\Ran(M)$ with elements exactly of size $n$ and at most size $n$, respectively. In the former case, $\Conf_n(M)$ is called the \emph{unordered configuration space} of $n$ points.

Recall the Hausdorff distance between $P,Q\in \Ran(M)$ is defined as
\index{Hausdorff distance}\index{distance!Hausdorff}
\begin{align}
d_H(P,Q) & \colonequals \max \left\{\max_{p\in P}\min_{q\in Q} d(p,q), \max_{q\in Q}\min_{p\in P}d(p,q)\right\} \\
& = \min\left\{r : Q\subseteq \textstyle \bigcup_{p\in P} B(p,r),\ P\subseteq \bigcup_{q\in Q} B(q,r)\right\}. \nonumber
\end{align}
We write $B$ for the open ball in $M$ and $\overline{B}$ for the closed ball in $M$. The Hausdorff distance is an upper bound for a coarser metric $d_M$ on $M$, as
\begin{equation}
d_M(X,Y) \colonequals \inf_{x\in X,y\in Y} \left\{ d(x,y)\right\} \leqslant d_H(X,Y),
\end{equation}
for any $X, Y\subseteq M$. On the product space $\Ran(M)\times \R_{\geqslant 0}$ we use the sup-norm
\begin{equation}
d_\infty((P,r),(Q,s)) \colonequals \max\left\{d_H(P,Q),\arrowvert r-s\arrowvert \right\}.
\end{equation}

\begin{definition}\label{def:cech-complex}
Given a pair $(P,r)\in \Ran(M)\times \R_{\geqslant 0}$, the \emph{\v Cech complex} on $P$ with radius $r$ is the simplicial complex with vertices $P$, and $P'\subseteq P$ a simplex whenever $\bigcap_{p\in P'} \overline B(p,r) \neq \emptyset$. This assignment $\check C\colon \Ran(M)\times \R_{\geqslant 0}\to \SC$ is called the \emph{\v Cech map}.
\end{definition}

Some of the spaces we are interested in are \emph{semialgebraic}. Recall that a set in $\R^N$ is semialgebraic if it can be expressed as a finite union of sets of the form
\[
\{x\in \R^N\ :\ f_1(x)=0,\dots,f_{m_1}(x)=0,g_1(x)>0,\dots,g_{m_2}(x)>0\},
\]
for polynomial functions $f_i,g_i \colon \R^N\to \R$. %Distance on a semialgebraic set is the restriction of Euclidean distance on $\R^N$ to the set.

\subsection{Stratifications}

Let $(A,\leqslant)$ be a poset, or simply $A$ when $\leqslant$ is clear from context.

\begin{example}\label{ex:posets}
The set of simplices of a simplicial complex $C$ forms a poset under inclusion. This is called the \emph{face poset} of $C$.
\end{example}

\begin{remark}\label{rem:posetrem}
A poset $(A,\leqslant )$ may be interpreted as a category, whose objects are $A$ and $\Hom(a,b) = *$ if $a\leqslant b$ and $\emptyset$ otherwise. A poset may also be interpreted as a topological space endowed with the Alexandrov topology, whose basis contains open sets of the form $U_a \colonequals \{b\in A\ :\ a\leqslant b\}$, for all $a\in A$.
\end{remark}

Let $X$ be a topological space. Our definitions follow \cite[Appendix A.5]{lurie-ha} and \cite[Section 2]{MR3590534}.

\begin{definition}\label{def:stratification}
An \emph{$A$-stratification} of $X$ is a continuous map $f\colon X\to A$.
\end{definition}

When $A$ is clear from context, $f$ is simply called a \emph{stratification}, and $X$ is called \emph{$A$-stratified by $f$}, or just \emph{$A$-stratified}, or even \emph{stratified}.  We write $X_a \colonequals \{x\in X: f(x)=a\}$ for the strata of $X$ and $A_{>a} \colonequals \{b\in A : b>a\}$ for the subposet based at a particular element $a\in A$.

Given two stratifications $f\colon X\to A$ and $g\colon Y\to B$, a \emph{stratified map} from $f$ to $g$ is a pair of continuous maps $\phi_0\colon X\to Y$ and $\phi_1 \colon A\to B$ such that $g\circ \phi_0 = \phi_1 \circ f$. Such a stratified map is a homeomorphism, embedding, etc. whenever $\phi_0$ and $\phi_0|_{X_a}$ have the same adjective, for every $a\in A$.

\begin{definition}\label{def:stratified-cone}
Let $X$ be a topological space. The \emph{open cone} of $X$ is $C(X) = (X\times [0,1)) \cup \{*\}$, with $U\subseteq C(X)$ open whenever 
\begin{itemize}
\item $U\cap (X\times[0,1))$ is open, and 
\item $X\times (0,\epsilon)\subseteq U$ for some $\epsilon>0$, if $*\in U$.
\end{itemize}
\end{definition}

If $X$ is compact and Hausdorff, $C(X)= X\times [0,1)/(X\times \{0\})$.

When $X$ is $(A,\leqslant)$-stratified by $f$, $C(X)$ is naturally $(A',\leqslant)$-stratified by an induced map $f'$, where $A'\colonequals A\cup \{\bullet\}$ and $\bullet\leqslant a$ for all $a\in A$. The stratifying map $f'\colon C(X)\to A'$ is given by $f'(x,t)=f(x)$ for all $t\in (0,1)$, and $f'(*)=\bullet$.

\begin{definition}\label{def:conical-stratification}
Let $f\colon X\to A$ be an $A$-stratification of $X$. Then $X$ is \emph{conically stratified at $x\in X$} by $f$ if there exist
\begin{itemize}
\item a topological space $Z$,
\item an $A_{>f(x)}$-stratified topological space $L$, and
\item an stratified map $Z\times C(L) \hookrightarrow X$ that is an open embedding whose image contains $x$.
\end{itemize} 
The space $X$ is \emph{conically stratified} by $f$ if it is conically stratified at every $x\in X$ by $f$, in which case we call $f$ a \emph{conical stratification} of $X$.
\end{definition}

%Given a $B$-stratification $g\colon Y\to B$, the stratified cone $C(Y)$ is the quotient $Y\times [0,1)/Y\times \{0\}$, and has the $B\cup\{*\}$-stratification $g'\colon C(Y)\to B\cup \{*\}$, given by $g'(y,t\neq 0)=g(y)$ and $g'(y,0)=*$. 
The product $Z\times C(L)$ is canonically stratified by projection to the cone factor, that is,  by the map $(z,c) \mapsto g(c)$ for $g$ a stratification of $C(L)$.

\begin{example}\label{ex:conically-stratified-spaces}
In Figure \ref{fig:stratifiedsphere}, the spaces $C_2,S_2,S_3$ are conically stratified, while $C_1, C_3, S_1$ are not. The spaces $C_1,S_1$ fail to be conically stratified at every point on the equator, while $C_3$ fails to be conically stratified at the complex number 1 (see Example \ref{ex:stratifiedfrontier}).
\end{example}

\begin{figure}[ht]\centering
\begin{tikzpicture}[rel/.style={shorten >=4pt,shorten <=4pt}]
%%
%% C1
%%
\draw[uicblue,{Bracket[]-Bracket[]},line width=1pt] (1,0) arc (0:-180:1);
\draw[uicblue,{Parenthesis[]-Parenthesis[]},line width=1pt] ($(0,0)+(1:1)$) arc (1:179:1);
\begin{scope}[shift={(1.5,0)}]
\fill[uicblue] (0,.5) circle (.05);
\fill[uicblue] (0,-.5) circle (.05);
\draw[rel] (0,.5)--(0,-.5);
\end{scope}
\node at (-1.2,1) {$C_1$};
%%
%% C2
%%
\begin{scope}[shift={(4.2,0)}]
\draw[uicblue,line width=1pt] ($(0,0)+(5:1)$) arc (5:175:1);
\draw[uicblue,line width=1pt] ($(0,0)+(-5:1)$) arc (-5:-175:1);
\fill[uicred] (1,0) circle (.05);
\fill[uicred] (-1,0) circle (.05);
\begin{scope}[shift={(1.5,0)}]
\fill[uicblue] (-.2,.5) circle (.05);
\fill[uicblue] (.4,.5) circle (.05);
\fill[uicred] (-.2,-.5) circle (.05);
\fill[uicred] (.4,-.5) circle (.05);
\foreach \x\y\z\w in {-1/1/-1/-1, -1/1/1/-1, 1/1/-1/-1, 1/1/1/-1}{
  \draw[rel] (\x*.5*.6+.1,\y*.5)--(\z*.5*.6+.1,\w*.5);
}
\end{scope}
\node at (-1.2,1) {$C_2$};
\end{scope}
%%
%% C3
%%
\begin{scope}[shift={(8.4,0)}]
\foreach \r in {2,...,10}{
  \fill[uicred] ($(0,0)+(360/\r:1)$) circle (.05);
}
\fill[black] (1,0) circle (.05);
\foreach \r\s in {1/2, 2/3, 3/4, 4/5, 5/6, 6/7}{
  \draw[uicblue,line width=1pt] ($(0,0)+(360/\r-5:1)$) arc (360/\r-5:360/\s+5:1);
}
\foreach \r in {10, 15, 20}{
  \draw[black] ($(0,0)+(\r+3:1)$) circle (.01); % ellipsis in shape
  \draw[black] ($(2.7,.35)+(180:\r/60)$) circle (.01); % ellipsis in poset, top
  \draw[black] ($(2.6,-.4)+(180:\r/60)$) circle (.01); % ellipsis in poset, bottom
}
\begin{scope}[shift={(1.9,.2)}]
\foreach \x in {0,...,4}{
  \coordinate (b\x) at (-.5+\x*.2,.5);
  \fill[uicblue] (b\x) circle (.05);
  \coordinate (a\x) at (-.4+\x*.2,-.3);
  \fill[uicred] (a\x) circle (.05);
}
\coordinate (ax) at (-.4+2*.2,-.9);
\fill[black] (ax) circle (.05);
\coordinate (c5) at (-.6+5*.2+.1,.15);
\foreach \x\y in {a0/b0, a0/b1, a1/b1, a1/b2, a2/b2, a2/b3, a3/b3, a3/b4, a4/b4, a4/c5}{
  \draw[rel] (\x)--(\y);
}
\foreach \x in {0,...,4}{
  \draw[rel] (ax)--(a\x);
}
\draw[rel] (ax)--++(35:.5);
%\draw[rel] (a0)--++(33:1.7);
%\draw[rel] (a0)--++(35:1.6);
%\draw[rel] (a0)--++(37:1.5);
\end{scope}
\node at (-1.2,1) {$C_3$};
\end{scope}
\begin{scope}[shift={(0,-2.5)}]
%%
%% S1
%%
\draw[uicblue,densely dashed] (-1,.1) arc (180:0:1 and .3);
\fill[uicblue!20] (0,-.1) ellipse (1 and .3);
\draw[uicblue] (-1,-.1) arc (180:0:1 and .3);
\fill[uicblue!62,opacity=.8] (-1,.1) arc (180:0:1) arc (0:-180:1 and .3); 
\fill[uicblue!50] (-1,-.1) arc (-180:0:1) arc (0:-180:1 and .3); 
\draw[uicblue,densely dashed] (-1,.1) arc (-180:0:1 and .3);
\draw[uicblue] (-1,-.1) arc (-180:0:1 and .3);
\fill[uicblue] (1.5,.5) circle (.05);
\fill[uicblue] (1.5,-.5) circle (.05);
\draw[rel] (1.5,.5)--(1.5,-.5);
\node at (-1.2,1) {$S_1$};
%%
%% S2
%%
\begin{scope}[shift={(4.2,0)}]
\draw[uicblue,densely dashed] (-1,.15) arc (180:0:1 and .3);
\draw[uicred,line width=1pt] (-1,0) arc (180:0:1 and .3);
\fill[uicblue!20] (0,-.15) ellipse (1 and .3);
\draw[uicblue,densely dashed] (-1,-.15) arc (180:0:1 and .3);
\fill[uicblue!62,opacity=.8] (-1,.15) arc (180:0:1) arc (0:-180:1 and .3); 
\fill[uicblue!50] (-1,-.15) arc (-180:0:1) arc (0:-180:1 and .3); 
\draw[uicblue,densely dashed] (-1,.15) arc (-180:0:1 and .3);
\draw[uicred,line width=1pt] (-1,0) arc (-180:0:1 and .3);
\draw[uicblue,densely dashed] (-1,-.15) arc (-180:0:1 and .3);
\fill[uicblue] (1.3,.5) circle (.05);
\fill[uicblue] (1.9,.5) circle (.05);
\fill[uicred] (1.6,-.5) circle (.05);
\draw[rel] (1.3,.5)--(1.6,-.5);
\draw[rel] (1.9,.5)--(1.6,-.5);
\node at (-1.2,1) {$S_2$};
\end{scope}
%%
%% S3
%%
\begin{scope}[shift={(8.4,0)}]
\draw[uicblue,densely dashed] (-1,.15) arc (180:0:1 and .3);
\draw[uicred,line width=1pt] (-1,0) arc (180:0:1 and .3);
\fill[uicblue!20] (0,-.15) ellipse (1 and .3);
\draw[uicblue,densely dashed] (-1,-.15) arc (180:0:1 and .3);
\fill[uicblue!62,opacity=.8] (-1,.15) arc (180:0:1) arc (0:-180:1 and .3); 
\fill[uicblue!50] (-1,-.15) arc (-180:0:1) arc (0:-180:1 and .3); 
\draw[uicblue,densely dashed] (-1,.15) arc (-180:0:1 and .3);
\draw[uicred,line width=1pt] (-1,0) arc (-180:0:1 and .3);
\draw[uicblue,densely dashed] (-1,-.15) arc (-180:0:1 and .3);
\draw[uicblue!20,line width=1.5pt,fill=black] (0,-.3) circle (.06);
\begin{scope}[shift={(0,.5)}]
\fill[uicblue] (1.3,.3) circle (.05);
\fill[uicblue] (1.9,.3) circle (.05);
\fill[uicred] (1.6,-.5) circle (.05);
\fill (1.6,-1.3) circle (.05);
\draw[rel] (1.3,.3)--(1.6,-.5);
\draw[rel] (1.9,.3)--(1.6,-.5);
\draw[rel] (1.6,-.5)--(1.6,-1.3);
\end{scope}
\node at (-1.2,1) {$S_3$};
\end{scope}
\end{scope}
\end{tikzpicture}
\caption{Three stratifications of the circle and the sphere, with higher vertical position indicating higher order in the poset. The spaces $C_1$ and $C_2$ are great circles through the poles of $S_1$ and $S_2$, respectively. See Examples \ref{ex:conically-stratified-spaces},  \ref{ex:stratifiedfrontier}, \ref{ex:compatible-stratifications} for observations about these stratifications.}
\label{fig:stratifiedsphere}
\end{figure}

%\subsection{Categories}
%\label{sec:categories}

%We assume basic knowledge of categories, including simplicial sets, and mention some categorical constructions that are outside of this assumed scope.

%Let $f\colon X\to A$ be a stratified topological space and let $\mathcal C$ be a category.

%\begin{definition}
%$\Sing$, $\Sing^A$, $\Sing_A$
%\end{definition}

%The main simplicial set we are interested in is the \emph{nerve} $N(\mathcal C)$ of a category $\mathcal C$. The nerve has 0-simplices given by the objects of $\mathcal C$, 1-simplices given by morhpisms in $\mathcal C$, and $n$-simplices given by sequences of $n$ composable morphisms in $\mathcal C$.

\section{Supporting results}
\label{sec:lemmas}

In this section we develop ideas that support the main statements of Section \ref{sec:results}. First we explore the implications for conical stratifications.

\subsection{Conical stratifications}

An $A$-stratification of $X$ satisfies the \emph{frontier condition} if $(\overline{X_a}\setminus X_a) \cap X_b \neq \emptyset$ implies $X_b\subseteq \overline {X_a}$, for every $a,b\in A$. See Figure \ref{fig:stratifiedsphere} for examples of spaces satisfying the frontier condition.

\begin{lemma}\label{lemma:conical-implies-frontier}
Let $f$ be an $A$-stratification of a topological space $X$ whose strata are path-connected. If $f$ is a conical stratification, then $f$ satisfies the frontier condition.
\end{lemma}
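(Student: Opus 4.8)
Fix a pair $b\leqslant a$ in $A$ with $(\overline{X_a}\setminus X_a)\cap X_b\neq\emptyset$. The plan is to show that $S\colonequals\overline{X_a}\cap X_b$ is simultaneously closed, nonempty, and open in $X_b$; since $X_b$ is path-connected, hence connected, this forces $S=X_b$, which is exactly the assertion $X_b\subseteq\overline{X_a}$. Closedness is immediate, as $\overline{X_a}$ is closed in $X$, and nonemptiness is the hypothesis. I would also record at the outset that $a\neq b$: otherwise $(\overline{X_b}\setminus X_b)\cap X_b=\emptyset$. Hence $a>b$, so $a\in A_{>b}$, and all of the work lies in proving that $S$ is open in $X_b$.

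So fix $x\in S$. Since $f$ is a conical stratification, it is conically stratified at $x$, which supplies a space $Z$, an $A_{>f(x)}$-stratified space $L$, and a stratified open embedding $\phi\colon Z\times C(L)\hookrightarrow X$ whose open image $U\colonequals\im(\phi)$ contains $x$. I would use the canonical $A_{\geqslant b}$-stratification of $Z\times C(L)$, in which the cone point $*$ of $C(L)$ lies over $b=f(x)$: its $b$-stratum is $Z\times\{*\}$, and for $a\in A_{>b}$ its $a$-stratum is $Z\times L_a\times(0,1)$. Because $\phi$ is a \emph{stratified} open embedding, its restriction to each stratum of the source is an open embedding into the corresponding stratum of $X$; in particular $\phi(Z\times\{*\})$ is open in $X_b$, and it contains $x$ (write $x=\phi(z_0,*)$). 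It therefore suffices to prove $\phi(Z\times\{*\})\subseteq\overline{X_a}$.

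To obtain this I would exploit $x\in\overline{X_a}$. Since $U$ is an open neighborhood of $x$, we get $U\cap X_a\neq\emptyset$; but $U\cap X_a=\phi(Z\times L_a\times(0,1))$, so $L_a\neq\emptyset$, say $\ell_0\in L_a$. In the cone $C(L)$ the path $t\mapsto(\ell_0,t)$ tends to the cone point as $t\to 0$, so $*\in\overline{L_a\times(0,1)}$ and hence $Z\times\{*\}\subseteq\overline{Z\times L_a\times(0,1)}$ inside $Z\times C(L)$. Applying the continuous map $\phi$,
\[
\phi(Z\times\{*\})\subseteq\overline{\phi(Z\times L_a\times(0,1))}=\overline{U\cap X_a}\subseteq\overline{X_a}.
\]
Thus every point of $S$ has an $X_b$-open neighborhood contained in $S$, so $S$ is open in $X_b$, and the connectedness argument finishes the proof.

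The one place to be careful — and essentially the only obstacle — is the bookkeeping between the abstract posets: one must verify that the conical chart $Z\times C(L)$ is stratified over the subposet $A_{\geqslant f(x)}\subseteq A$ with the cone point placed over $f(x)$, so that its $a$-stratum genuinely maps into $X_a$, and one must invoke the full definition of a stratified open embedding (open on every stratum, not merely on the whole space) to produce the open neighborhood of $x$ living inside the single stratum $X_b$. Once those identifications are in place, the remainder is elementary point-set topology.
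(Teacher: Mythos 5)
Your proof is correct and takes essentially the same route as the paper's: use the conical chart at a point $x\in\overline{X_a}\cap X_b$ to produce an $X_b$-open neighborhood of $x$ contained in $\overline{X_a}$ (via the cone point lying in the closure of the $a$-stratum of $Z\times C(L)$), then conclude by connectedness of $X_b$. You are in fact slightly more careful than the paper at one point, explicitly deriving $L_a\neq\emptyset$ from $x\in\overline{X_a}$ and the openness of the chart image before asserting that the cone-point stratum lies in $\overline{(Z\times C(L))_a}$.
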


\begin{proof}
Let $a,b\in A$. Since $X$ is conically stratified at $x\in X_b$, there is a stratified open embedding $\varphi\colon Z\times C(L)\to X$, as in Definition \ref{def:conical-stratification}, for some $A_{>b}$-stratified space $L$. 

First note that $L$ does not depend on $x$, as the image of $\varphi$ contains an open neighborhood $U_x\subseteq X$ of $x$, hence every element in $U_x\cap X_b = \varphi(Z\times *)$ has the same associated $L$ (up to a stratified homeomorphism). Indeed, suppose that $x'\in X_b$ exists with an open embedding $\varphi_{x'}\colon Z_{x'}\times C(L_{x'})$ and $L\neq L_{x'}$. Given a path $\gamma\colon I\to X_b$ from $x$ to $x'$, letting $L_{\gamma(t)}$ be the $A_{>f(\gamma(t))}$-stratified space guaranteed to exist by Definition \ref{def:conical-stratification}, at $t'=\sup_{t\in I}\{L_{\gamma(s)} = L\ \forall\ s\leqslant t\}$ we will arrive at a contradiction to the previous observation. 

Next, suppose that $(\overline{X_a} \setminus X_a)\cap X_b \neq\emptyset$, and let $x\in (\overline{X_a} \setminus X_a)\cap X_b$. 
%Since $X$ is conically stratified at $x$, we have a stratified open embedding $\textsf{emb}\colon Z\times C(L)\to X$, as in Definition \ref{def:conical-stratification}, for some $A_{>b}$-stratified space $L$, as $f(x)=b$.
Given the stratified cone $g\colon C(L)\to A_{\geqslant b}$ from the embedding $\varphi$, it follows that $b\leqslant a$, since every open neighborhood of $x$ in $X$ intersects $X_a$. Hence $C(L)_b\subseteq \overline{C(L)_a}$, as the stratum $C(L)_b$ of the cone point $b$ is adjacent to all other strata of the cone, and $a$ is in the image of $g$ by assumption. Hence $Z\times C(L)_b\subseteq \overline{Z\times C(L)_a}$, both viewed as subsets of $Z\times C(L)$. By continuity of the embedding $\varphi$, it follows that
\begin{equation}\label{eq:conical-embedding}
\varphi(Z\times C(L)_b)\subseteq \varphi\left(\overline{Z\times C(L)_b}\right) \subseteq \overline{\varphi(Z\times C(L)_a)}.
\end{equation}
Since $\varphi(Z\times C(L)_b)\subseteq X_b$, Equation \eqref{eq:conical-embedding} means that $x$ has an open neighborhood $U_x\subseteq X_b$ such that $U_x\subseteq \overline {X_a}$. 

Finally, since $L$ is the same for all elements of $X_b$, $a$ must be in the image of the associated cone map, and this is enough to conclude that every element of $X_b$ has a neighborhood within the closure of $X_a$. Hence $X_b\subseteq \overline {X_a}$.
\end{proof}

The converse of Lemma \ref{lemma:conical-implies-frontier} is false, as Example \ref{ex:stratifiedfrontier} shows.

\begin{example}\label{ex:stratifiedfrontier}
Consider the circle $C_3$ from Figure \ref{fig:stratifiedsphere}, embedded as the unit circle in the complex numbers $\C$. This circle is stratified by the poset $(A,\leqslant)$, where $A=\{x_1,x_2,\dots\}\cup \{y_1,y_2,\dots\}$, with relations $x_j\leqslant y_j$ and $x_{j+1}\leqslant y_j$ for all $j\in \Z_{>0}$. To ensure continuity of the stratifying map at the complex number 1, we add the relations $x_1\leqslant x_j$ for all $j\in \Z_{\geqslant 2}$. The stratifying map $f\colon C_3\to A$ is given by 
\[
f (e^{i\theta}) = \begin{cases}
x_j & \text{\ if\ } \theta=2\pi/j, \\ y_j & \text{\ if\ } \theta\in (2\pi/(j+1),2\pi/j).
\end{cases}
\]
That is, the black dot in $C_3$ in Figure \ref{fig:stratifiedsphere} corresponds to $x_1$, each red dot corresponds to an $x_{j\geqslant 2}$, and each blue interval corresponds to a $y_j$. 

The frontier condition is satisfied trivially for strata $(C_3)_{x_j}$, as they are already closed in $C_3$. For $(C_3)_{y_j}$, note the closure of the open arc $(C_3)_{y_j} = \{e^{i\theta}\ :\ \theta \in (2\pi/(j+1),2\pi/j)\}$ contains exactly $(C_3)_{x_j} = \{e^{i2\pi/j}\}$ and $(C_3)_{x_{j+1}} = \{e^{i2\pi/(j+1)}\}$, hence the frontier condition is also satisfied here.

However, $C_3$ is not conically stratified at $1=e^{i2\pi}$. Indeed, following Definition \ref{def:conical-stratification}, we note that $Z$ must be $\{*\}$, as $\{1\} = (C_3)_{x_1}$ is 0-dimensional. So if $C_3$ were conically stratified at $1$, there must be some open neighborhood of $1$ that is the homeomorphic image of a cone $C(L)=Z\times C(L)$. To have an open embedding $C(L)\hookrightarrow C_3$, the cone $C(L)$ must have strata that correspond to strata in the open neighborhood of 1. Since every neighborhood of 1 contains elements of the form $e^{i\theta}$ where $\theta \in (0,\epsilon)$, for every $\epsilon>0$, such a construction would imply that there are distinct 0-dimensional strata in $C(L)$ corresponding to  $(C_3)_{x_\ell} = \{e^{i2\pi/\ell}\}$, for every integer $\ell>2\pi/\epsilon$. This is a contradiction, as the only 0-dimensional stratum in $C(L)$ is the cone point.
\end{example}

An $A$-stratification of $X$ is \emph{compatible with}, or \emph{refines} a $B$-stratification of $X$ if for every $a\in A$ and $b\in B$, either $X_a\subseteq X_b$ or $X_a\cap X_b = \emptyset$. Equivalently, if for every $b\in B$ there is a subset $A'\subseteq A$ such that $X_b = \bigcup_{a\in A'} X_a$.  

\begin{example}\label{ex:compatible-stratifications}
In Figure \ref{fig:stratifiedsphere}, $C_3$ is compatible with $C_2$, and $C_2$ is compatible with $C_1$. Similarly, $S_3$ is compatible with $S_2$, and $S_2$ is compatible with $S_1$. 
\end{example}

A stratification is \emph{semialgebraic} if all its strata are semialgebraic sets.

\begin{lemma}\label{lemma:conical-compatible}
Let $f$ be a semialgebraic stratification of a closed semialgebraic set $X$. Then there exists a conical semialgebraic stratification of $X$ compatible with $f$.
\end{lemma}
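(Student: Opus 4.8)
The plan is to invoke the existence of semialgebraic triangulations compatible with a given finite family of semialgebraic sets, and then extract a conical stratification from the resulting simplicial structure. Concretely, since $X$ is a closed semialgebraic set and $f$ has finitely many strata (a semialgebraic set has finitely many connected components, and each stratum $X_a$ is semialgebraic), the collection $\{X_a\}_{a\in A}$ is a finite partition of $X$ into semialgebraic pieces. By the semialgebraic triangulation theorem (see e.g. Bochnak–Coste–Roy, or Hironaka's work on triangulations of semialgebraic sets), there is a finite simplicial complex $K$ and a semialgebraic homeomorphism $h\colon |K|\to X$ such that each stratum $X_a$ is a union of images of open simplices of $K$. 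I would state this as the input lemma.

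From here the key step is to build the conical stratification. The natural candidate is the map $X\to A$ sending a point $x$ to the index $a$ of the stratum $X_a$ containing the (image under $h$ of the) open simplex through $x$; call this $f'$. Since each $X_a$ is a union of images of open simplices, $f'$ refines $f$ in the sense defined in the excerpt. I would then refine $A$ further if necessary — really the cleanest target poset is the face poset of $K$ itself, mapping $x$ to the unique open simplex containing $h^{-1}(x)$, and then post-composing with the map of posets $\mathrm{Face}(K)\to A$ induced by "which stratum contains this simplex." One must check this composite is a well-defined poset map (monotone), which follows because if $\sigma$ is a face of $\tau$ then $\overline{h(\sigma^\circ)}\supseteq$ ... wait, rather $h(\sigma^\circ)\subseteq \overline{h(\tau^\circ)}$, and since strata are unions of open-simplex images and $f$ is a stratification (so $X_b \cap \overline{X_a}\neq\emptyset$ with the frontier-type behaviour forces $b\leqslant a$), the stratum of $\sigma$ is $\leqslant$ the stratum of $\tau$. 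Continuity of $f'$ is then automatic: the preimage of an upset is a union of open stars, which is open in $|K|$, hence open in $X$.

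The remaining point is conicality. This is where the simplicial structure pays off: for a point $x$ lying in the open simplex $\sigma^\circ$, a neighbourhood of $x$ in $|K|$ is homeomorphic to $\sigma^\circ \times C(|\mathrm{Lk}(\sigma)|)$, where $\mathrm{Lk}(\sigma)$ is the link of $\sigma$ in $K$ — this is the standard fact that simplicial complexes are conically stratified by their face posets. Transporting this through $h$ (and checking $h$ restricts to the required stratified open embedding, using that $h$ is a semialgebraic homeomorphism matching up the open simplices with the pieces of the partition) gives the local cone structure, with $Z = \sigma^\circ$ and $L = |\mathrm{Lk}(\sigma)|$ stratified by $A_{>f'(x)}$ via the face-poset-to-$A$ map. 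One should confirm the link stratification lands in $A_{>f'(x)}$ and not merely $A_{\geqslant f'(x)}$, which holds because every simplex properly containing $\sigma$ has strictly larger stratum index — or, if two adjacent simplices happen to lie in the same stratum, this is precisely why one works with the face poset $\mathrm{Face}(K)$ (where the strict inequality is automatic) as the genuine refining poset, rather than with $A$ directly; the statement only asks for \emph{some} conical semialgebraic stratification compatible with $f$, so refining all the way to $\mathrm{Face}(K)$ is legitimate.

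I expect the main obstacle to be bookkeeping rather than conceptual: making sure the refinement to the face poset is simultaneously (i) compatible with $f$, (ii) genuinely conical with the link-cone local model, and (iii) still semialgebraic — the last because the open simplices and links of a semialgebraic triangulation are semialgebraic sets, so each new stratum $h(\sigma^\circ)$ is semialgebraic. A secondary subtlety is that $C(L)$ in Definition \ref{def:conical-stratification} uses the half-open interval $[0,1)$ whereas the simplicial cone neighbourhood is naturally parametrized by a small $[0,\varepsilon)$; rescaling handles this, but one should note the link must be given its induced semialgebraic/stratified structure so that the embedding is stratified in the sense required. Everything else — continuity via open stars, monotonicity of the face-to-stratum map — is routine once the triangulation theorem is in hand.
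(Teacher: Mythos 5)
Your proposal is correct and follows essentially the same route as the paper: both invoke a semialgebraic triangulation of $X$ compatible with the strata of $f$, pass to the induced stratification by the face poset of the triangulating complex $K$ (which refines $f$ by construction and has semialgebraic strata), and then appeal to the standard fact that the face-poset stratification of a simplicial complex is conical — the paper cites Lurie's Proposition A.6.8 for this, while you sketch its proof via the link-cone local model $\sigma^\circ\times C(\arrowvert \mathrm{Lk}(\sigma)\arrowvert)$. Your additional checks (monotonicity, continuity via open stars, strictness of the link stratification) are correct bookkeeping that the paper leaves to the cited references.
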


\begin{proof}
Let $f\colon X\to A$ be as in the statement. By \cite[Theorem II.4.2]{MR1463945}, there exists a simplicial complex $K$ whose geometric realization $|K|$ is homeomorphic to $X$, and a stratification $g\colon \arrowvert K\arrowvert \to (S(K),\subseteq)$ that refines $f$. We recall briefly that the geometric realization $|K|$ is a topological space embedded in Euclidean space, with $n$-simplices represented by $n$-dimensional subspaces.

 This stratification of the geometric realization of a simplicial complex \cite[Definition A.6.7]{lurie-ha} is the canonical one, identifying interiors of simplices with their corresponding simplices in the face poset $(S(K),\subseteq)$. This map is conical by \cite[Proposition A.6.8]{lurie-ha}. %The $S$-stratification is semialgebraic because the interiors of simplices are semialgebraic, and finite unions of semialgebraic sets are semialgebraic.
\end{proof}

The simplicial complex $K$ is unique (up to simplicial complex isomorphism) only if $X$ is bounded \cite[Remark II.4.3]{MR1463945}. Next we develop a new structure on simplicial complexes.

\subsection{Simplicial complexes}
\label{sec:scs}

For $C,C'\in \SC$, a \emph{simplicial map} is a function $V(C)\to V(C')$ such that the induced map on $S(C)$ has image in $S(C')$. In other words, a simplicial map sends simplices of $C$ to simplices of $C'$.

For $C\in \SC$, we denote by $[C]$ the set of simplicial complexes isomorphic to $C$. In a similar fashion, we write $[\SC]$ for the set of isomorphism classes of simplicial complexes.

\begin{definition}\label{sc-order}
Let $\succcurlyeq$ be the relation on $[\SC]$ given by $[C]\succcurlyeq [C']$ whenever there is a simplicial map $C\to C'$ that is surjective on vertices.
\end{definition}

Figure \ref{fig:relation} gives an example of $\succcurlyeq$, with order decreasing from left to right. This relation is well-defined, irrespective of the choice of class representatives.

\begin{figure}[ht]\centering
\begin{tikzpicture}[scale=1.2]
% kreisais SC
\foreach \x\y\n\lab\anch in {.1/0/a/b/0, 1/-.2/b/d/225, .8/.6/c/c/270, 1.7/-.7/d/e/110, 2.2/-.5/e/f/135, 0/-.4/f/a/0}{
  \coordinate (\n) at (\x,\y);
  \node[anchor=\anch] at (\n) {$\lab$};
}
\draw[uicred,line width=1pt] (b)--(c)--(a)--(b)--(d)--(e) (a)--(f);
\foreach \n in {a, b, c, d, e, f}{
  \fill (\n) circle (.05);
}
\node at (.7,-1.1) {$C$};
% videjais SC
\begin{scope}[shift={(4.7,0)}]
\foreach \x\y\n\lab\anch in {.1/0/a/x/0, 1/-.2/b/z/225, .8/.6/c/y/270, 1.7/-.7/d/w/180}{
  \coordinate (\n) at (\x,\y);
  \node[anchor=\anch] at (\n) {$\lab$};
}
\draw[uicred,line width=1pt] (b)--(c)--(a)--(b)--(d);
\foreach \n in {a, b, c, d}{
  \fill (\n) circle (.05);
}
\node at (.7,-1.1) {$D$};
\end{scope}
% labais SC
\begin{scope}[shift={(9,0)}]
\foreach \x\y\n\lab\anch in {.1/0/a/x/0, 1/-.2/b/z/225, .8/.6/c/y/270, 1.7/-.7/d/w/180}{
  \coordinate (\n) at (\x,\y);
  \node[anchor=\anch] at (\n) {$\lab$};
}
\fill[uicblue!50] (a)--(b)--(c);
\draw[uicred,line width=1pt] (b)--(c)--(a)--(b)--(d) (a)--(d);
\foreach \n in {a,b,c,d}{
  \fill (\n) circle (.05);
}
\node at (.7,-1.1) {$E$};
\end{scope}
% connectors
\newcommand\strr{.7}
\coordinate (mid1) at (3.4,0);
\coordinate (mid2) at (7.8,0);
\draw[->] ($(mid1)+(180:\strr)$) to 
  node[above=10pt] {$a\mapsto x$}
  node[above=1pt] {$b\mapsto x$}
  node[below=2pt] {$e\mapsto w$}
  node[below=8pt] {$f\mapsto w$}
++(0:\strr*2);
\draw[->] ($(mid2)+(180:\strr)$) to 
  node[above=1pt] {identity} 
  node[below=2pt] {on vertices}
++(0:\strr*2);
\end{tikzpicture}
\caption{An example of the relations $C\succcurlyeq D$ and $D\succcurlyeq E$.}
\label{fig:relation}
\end{figure}

\begin{lemma}\label{screllemma}
The relation $\succcurlyeq$ defines a partial order on $[\SC]$.
\end{lemma}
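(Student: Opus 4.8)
The plan is to verify the three axioms of a partial order — reflexivity, transitivity, and antisymmetry — on the quotient set $[\SC]$, after first confirming the relation descends to isomorphism classes. Reflexivity is immediate: the identity map $C\to C$ is simplicial and surjective on vertices, so $[C]\succcurlyeq[C]$. For transitivity, if $[C]\succcurlyeq[D]$ and $[D]\succcurlyeq[E]$, pick representatives and simplicial maps $\phi\colon C\to D$ and $\psi\colon D\to E$ each surjective on vertices; then $\psi\circ\phi\colon C\to E$ is simplicial (composition of simplicial maps is simplicial) and surjective on vertices (composition of surjections is a surjection), giving $[C]\succcurlyeq[E]$. Well-definedness on classes follows by the same composition trick: precomposing and postcomposing a witnessing map with isomorphisms preserves both ``simplicial'' and ``surjective on vertices.''

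The real content is antisymmetry: if $[C]\succcurlyeq[C']$ and $[C']\succcurlyeq[C]$, then $[C]=[C']$. I would fix simplicial maps $\phi\colon C\to C'$ and $\psi\colon C'\to C$, each surjective on vertices. The key observation is a finiteness/counting argument: since $\phi$ and $\psi$ are surjective on vertex sets, $|V(C)|\geqslant|V(C')|\geqslant|V(C)|$, so $|V(C)|=|V(C')|$ and hence $\phi$ and $\psi$ are bijections on vertices; in particular $\psi\circ\phi\colon C\to C$ is a vertex-bijective simplicial self-map of a finite complex. I would then argue that $\phi$ must be an isomorphism of simplicial complexes. The direction ``$\phi$ is injective on vertices and simplicial, hence injective on simplices'' gives $|S(C)|\geqslant |S(C')|$ only if $\phi$ is also surjective on simplices — so I need the symmetric inequality $|S(C)|\geqslant|S(C')|\geqslant|S(C)|$ coming from both maps, forcing $\phi$ to be a bijection on simplices as well. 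Concretely: a vertex-bijective simplicial map sends each simplex of $C$ to a simplex of $C'$, so $|S(C)|\leqslant|S(C')|$; symmetrically $|S(C')|\leqslant|S(C)|$; equality plus injectivity (from vertex-injectivity) shows $\phi$ induces a bijection $S(C)\to S(C')$. A vertex-bijective simplicial map that is also a bijection on simplices is precisely a simplicial isomorphism, so $[C]=[C']$.

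The main obstacle is antisymmetry, and within it the subtle point is that a vertex-bijective simplicial map need not a priori be an isomorphism — it could fail to be surjective on simplices (e.g. the boundary of a triangle maps into the filled triangle). One must genuinely use the existence of the map in \emph{both} directions and a counting argument on $S(C)$ and $S(C')$, exploiting finiteness of the complexes; this is where the hypothesis that $\SC$ consists of finite complexes is essential. Everything else — reflexivity, transitivity, well-definedness — is a routine check that composition of simplicial maps is simplicial and that compositions of surjections are surjective.
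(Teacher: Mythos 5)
Your proof is correct and follows essentially the same route as the paper: vertex counting forces the witnessing maps to be vertex-bijective, and then injectivity on simplices in both directions (your explicit count $|S(C)|=|S(C')|$) forces a simplicial isomorphism. If anything, your treatment of antisymmetry is slightly more careful than the paper's, which passes from ``injective on simplices in both directions'' to ``bijective on simplices'' without spelling out the cardinality argument you give.
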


\begin{proof}
Let $[C],[C'],[C'']\in [\SC]$. For reflexivity, take any two representatives $C_1,C_2$ of $[C]$. Since $C_1\cong C_2$, there is a bijection $C_1\to C_2$ in $\SC$, which is surjective on vertices.

For anti-symmetry, suppose that $[C]\succcurlyeq [C']$ and $[C']\succcurlyeq [C]$. If $\arrowvert V(C)\arrowvert >\arrowvert V(C')\arrowvert $, then we cannot have $[C']\succcurlyeq [C]$, and if $\arrowvert V(C')\arrowvert >\arrowvert V(C)\arrowvert$, we cannot have $[C]\succcurlyeq [C']$. Hence we must have $\arrowvert V(C)\arrowvert=\arrowvert V(C')\arrowvert$, and so any map $C\to C'$ inducing $[C]\succcurlyeq  [C']$ must be injective on vertices, and so injective on simplices. Similarly, the same properties hold any map $C'\to C$ inducing $[C']\succcurlyeq [C]$. Hence we have a map $C\to C'$ that is bijective on simplices, so $C\cong C'$, and $[C]=[C']$.

For transitivity, suppose that $[C]\succcurlyeq [C']$ and $[C']\succcurlyeq [C'']$. Then there exists a simplicial map $C\to C'$ that is surjective on $V(C')$, as well a simplicial map $C'\to C''$ that is surjective on $V(C'')$. The composition of these two simplicial maps is a simplicial map $C\to C''$, and as both were individually surjective on vertices, the composition must also be surjective on vertices.
\qed \end{proof}

The same arguments show that $\succcurlyeq$ defines a preorder on $\SC$. Figure \ref{fig:hasse} shows the Hasse diagram of $([\SC],\succcurlyeq)$ for all simplicial complexes up to 3 vertices.

\begin{figure}[ht]\centering
\newcommand\xscale{2}
\newcommand\yscale{1.3}
\newcommand\rscale{.4}
\begin{tikzpicture}[
  scale=1.2,
  SCedge/.style={uicred,line width=1pt},
  Hconnector/.style={->,shorten >=20pt, shorten <=20pt},
  Vconnector/.style={->,shorten >=12pt, shorten <=12pt}
]
\foreach \x in {0,...,4}{
  \foreach \r\lab in {-30/a, 90/b, 210/c}{
    \coordinate (\x\lab) at ($(\x*\xscale,0)+(\r:\rscale)$);
    \fill (\x\lab) circle (.05);
  }
}
\fill[uicblue!50] (4a)--(4b)--(4c);
\foreach \x in {1,2}{
  \foreach \r\lab in {0/aa, 180/bb}{
    \coordinate (\x\lab) at ($(\x*\xscale,-\yscale)+(\r:\rscale*.9)$);
  }
}
\coordinate (2aaa) at (2*\xscale,-2*\yscale);
\fill (2aaa) circle (.05);
\foreach \x\y in {1b/1c, 2a/2b, 2b/2c, 3a/3b, 3b/3c, 3c/3a, 4a/4b, 4b/4c, 4c/4a, 2aa/2bb}{
  \draw[SCedge] (\x)--(\y);
}
\foreach \x in {0,...,4}{
  \foreach \lab in {a, b, c}{
    \fill (\x\lab) circle (.05);
  }
}
\foreach \x in {1,2}{
  \foreach \lab in {aa, bb}{
    \fill (\x\lab) circle (.05);
  }
}
\foreach \x\y in {0/1, 1/2, 2/3, 3/4}{
  \draw[Hconnector] (\x*\xscale,0)--(\x*\xscale+\xscale,0);
}
\draw[->,shorten >=24pt, shorten <=24pt] (\xscale,-\yscale)--(2*\xscale,-\yscale);
\draw[Vconnector] (\xscale,0)--(\xscale,-\yscale);
\draw[Vconnector] (2*\xscale,-\yscale)--(2*\xscale,-2*\yscale);
%\draw[->,shorten >=24pt, shorten <=12pt] (4*\xscale,0) to [out=270,in=0] (2*\xscale,-\yscale);
\draw[->,shorten >=24pt, shorten <=12pt,rounded corners=20pt] (4*\xscale,0) |- (2*\xscale,-\yscale);
\end{tikzpicture}
\caption{Relations in the poset $([\SC],\succcurlyeq)$, with arrows indicating simplicial maps and decreasing partial order. Horizontal simplicial maps are injective, vertical maps are not. Compare with Figure \ref{fig:hasse-frontier}.}
\label{fig:hasse}
\end{figure}

%\begin{remark}\label{rem:sccat}
%Let $\SCcat$ be the category of finite simplicial complexes and simplicial maps that are surjective on vertices. Consider $([\SC],\succcurlyeq)$ as a category, as in Remark \ref{rem:posetrem}, with a functor $\SCcat \to ([\SC],\succcurlyeq)$ that identifies objects $C,C'$ whenever $C\succcurlyeq C'$, $C'\succcurlyeq C$ and identifies the morphisms defining these relations. Lemma \ref{screllemma} then states that this functor is full.
%\end{remark}

\begin{remark}\label{rem:finiteness}
The assumption that all simplicial complexes in $\SC$ are finite is key to proving Lemma \ref{screllemma}, as anti-symmetry needs to compare sizes of sets. Figure \ref{fig:finite-counter} gives one such example where anti-symmetry is violated in the non-finite case. 
\end{remark}

\begin{figure}[ht]\centering
\newcommand\xscale{1}
\newcommand\xscalecm{.8cm}
\newcommand\yscale{1.2}
\begin{tikzpicture}[
  SCedge/.style={uicred,line width=1pt},
  Hconnector/.style={->,shorten >=20pt, shorten <=20pt},
  Vconnector/.style={->,shorten >=7pt, shorten <=7pt}
]
% coordinates
\foreach \x in {1,2,3,4,5,6}{
  \coordinate (a\x) at (\xscale*\x,\yscale);
  \coordinate (b\x) at (\xscale*\x,0);
}
% edges
\draw[SCedge] (a1)--(a2);
% vertices
\foreach \x in {1,2,3,4,5}{
  \fill (a\x) circle (.05);
  \fill (b\x) circle (.05);
}
% arrows
\foreach \x\y in {3/1, 4/2, 5/3}{
  \draw[Vconnector,densely dashed] (a\x)--(b\y);
}
\draw[Vconnector,densely dashed,transform canvas={xshift=\xscalecm},shorten <=17pt] (a5)--(b3);
\draw[Vconnector,densely dashed] (a2)--(b1);
\draw[Vconnector,densely dashed] (a1) to [bend right=30] (b1);
\foreach \x in {1,2,3,4,5}{
  \draw[white,line width=6pt, shorten <=10pt, shorten >=10pt] (b\x)--(a\x);
  \draw[Vconnector] (b\x)--(a\x);
}
% dots and labels
\node at (a6) {$\cdots$};
\node at (b6) {$\cdots$};
\node at (0,\yscale) {$C$};
\node at (0,0) {$C'$};
\end{tikzpicture}
\caption{Two infinite simplicial complexes $C,C'$ with maps $C'\to C$ (solid lines) and $C\to C'$ (dashed lines) described on vertices. Both maps are surjective on vertices, but the simplicial complexes are not isomorphic.}
\label{fig:finite-counter}
\end{figure}

\section{Main results}
\label{sec:results}

There is a natural point-counting map $\Ran(M)\to \Z_{>0}$, which is a stratification by \cite[Remark 5.5.1.10]{lurie-ha}, and is conical on $\Ran_{\leqslant n}(M)$ by \cite[Proposition 3.7.5]{MR3590534}. The goal of this section is to generalize the $\Z_{>0}$-stratification of $\Ran(M)$ to an $[\SC]$-stratification of $\Ran(M)\times \R_{\geqslant 0}$.

\subsection{Stratifying $\Ran(M)\times \R_{\geqslant 0}$}
\label{sec:stratran}

We consider the partially ordered set $([\SC],\succcurlyeq)$ as a topological space with the Alexandrov topology. Let $[\check C]\colon \Ran(M)\times \R_{\geqslant 0}\to [\SC]$ be the composition of $\check C$ and the projection to $[\SC]$.

\begin{theorem}\label{thm:cech-continuous}
The \v Cech map $[\check C]$ is continuous.
\end{theorem}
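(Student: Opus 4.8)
The plan is to reduce continuity, via the definition of the upset topology, to a purely local statement about the \v Cech complex under small perturbations, and then to settle that statement by writing down an explicit vertex-surjective simplicial map. Since $[\SC]$ carries the upset topology, the principal upsets $[\SC]_{\succcurlyeq[C]}=\{[C']:[C']\succcurlyeq[C]\}$ form a basis, so $[\check C]$ is continuous exactly when, for every $(P,r)\in\Ran(M)\times\R_{\geqslant 0}$, there is an $\epsilon>0$ such that
\[
d_\infty\big((Q,s),(P,r)\big)<\epsilon \quad\Longrightarrow\quad [\check C(Q,s)]\succcurlyeq[\check C(P,r)].
\]
Indeed this implies $[\check C]^{-1}\big([\SC]_{\succcurlyeq[C]}\big)$ is open for every $[C]$: if $[\check C(P,r)]\succcurlyeq[C]$ then, by transitivity of $\succcurlyeq$ (Lemma \ref{screllemma}), the whole $\epsilon$-ball around $(P,r)$ lands in $[\SC]_{\succcurlyeq[C]}$; the converse direction is immediate from the definition of continuity. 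So it suffices to produce a simplicial map $\check C(Q,s)\to\check C(P,r)$ that is surjective on vertices whenever $(Q,s)$ is sufficiently close to $(P,r)$.

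To build that map, write $P=\{p_1,\dots,p_k\}$ and let $2\delta_0$ be the least distance between distinct points of $P$ (take $\delta_0=1$ if $k=1$). If $\epsilon\leqslant\delta_0$ and $d_H(Q,P)<\epsilon$, then every $q\in Q$ lies within $\epsilon$ of a unique point of $P$, which I call $\phi(q)$; the other half of the Hausdorff bound (each point of $P$ is within $\epsilon$ of some point of $Q$) makes $\phi\colon Q\to P$ surjective. For $\phi$ to be a simplicial map $\check C(Q,s)\to\check C(P,r)$, I need: whenever $\bigcap_{q\in Q'}B(q,s)\neq\emptyset$ for some $Q'\subseteq Q$, also $\bigcap_{p\in\phi(Q')}B(p,r)\neq\emptyset$. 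Because $d(q,\phi(q))<\epsilon$ and $|s-r|<\epsilon$ give $B(q,s)\subseteq B(\phi(q),r+2\epsilon)$, any point of $\bigcap_{q\in Q'}B(q,s)$ lies in $\bigcap_{p\in\phi(Q')}B(p,r+2\epsilon)$. Hence $\phi$ can fail to be simplicial only if some subset $T\subseteq P$ satisfies $\bigcap_{p\in T}B(p,r)=\emptyset$ yet $\bigcap_{p\in T}B(p,r+2\epsilon)\neq\emptyset$.

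The crux, which is the step I expect to require genuine work, is therefore a uniform stability statement: for finite $T\subseteq M$ with $\bigcap_{p\in T}B(p,r)=\emptyset$ there is a $\delta_T>0$ with $\bigcap_{p\in T}B(p,r+\delta_T)=\emptyset$. The sets $\bigcap_{p\in T}B(p,r+\eta)$ decrease to $\bigcap_{p\in T}B(p,r)=\emptyset$ as $\eta\downarrow 0$, are closed, and lie inside a single ball; equivalently, one asks for the continuous function $y\mapsto\max_{p\in T}\big(d(y,p)-r\big)$ to attain its (necessarily positive) infimum, which holds as soon as closed balls of $M$ are compact --- in particular for $M$ a closed subset of $\R^N$, hence for $M$ semialgebraic. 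Granting this, choose $\epsilon\leqslant\delta_0$ also smaller than half of each of the finitely many $\delta_T$, one for every $T\subseteq P$ with empty ball-intersection at radius $r$. Then the obstruction of the previous paragraph cannot occur for any $(Q,s)$ in the $\epsilon$-ball, so $\phi$ is a vertex-surjective simplicial map, $[\check C(Q,s)]\succcurlyeq[\check C(P,r)]$, and $[\check C]$ is continuous. I would note that this compactness input is exactly where a hypothesis on $M$ beyond "metric space" is used, and would phrase the final statement with whatever regularity on $M$ the surrounding text assumes (compactness of closed balls suffices).
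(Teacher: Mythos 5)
Your proof is correct and follows the same overall skeleton as the paper's: reduce continuity to openness of preimages of principal upsets, build the vertex map $\phi\colon Q\to P$ from the Hausdorff bound and the minimal pairwise distance in $P$ (your $\delta_0$ is the paper's $r_1$, up to a factor), and then show $\phi$ is simplicial by ruling out the creation of new simplices under a small perturbation of $(P,r)$. Where you genuinely diverge is in that last step. The paper makes it quantitative: it introduces the ``\v Cech radius'' $\check cr(P',r)=r-d_M(P',\check cs(P'))$, takes $\tilde r$ smaller than $2\arrowvert\check cr(P',r)\arrowvert$ over all relevant $P'$, and runs a chain of inequalities, with a separate case for subsets whose balls meet in a single boundary point ($\check cr=0$). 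You instead prove a soft stability lemma --- an empty intersection $\bigcap_{p\in T}B(p,r)=\emptyset$ stays empty at radius $r+\delta_T$ --- by a compactness argument on $y\mapsto\max_{p\in T}(d(y,p)-r)$. This buys you a cleaner argument with no case split (the boundary case $\check cr=0$ is harmless for you, since such $T$ is already a simplex of $\check C(P,r)$ and needs no stability), at the cost of losing an explicit formula for the radius of the good neighborhood. It also surfaces honestly the hypothesis both arguments need: your stability lemma requires closed balls of $M$ to be compact (properness), and the paper needs exactly the same thing, though it only gestures at it when asserting that the infimum defining $\check cs(P')$ is attained ``as the balls are closed and $M$ is connected'' --- a justification that fails for, say, $M$ an open disk with a point removed, where the theorem as stated is actually false. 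One small caveat on your closing remark: a semialgebraic set need not be closed in $\R^N$, so properness does not follow from semialgebraicity alone; the right hypothesis to record is properness (or closedness in $\R^N$) of $M$.
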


\begin{proof}
A basis for the Alexandrov topology on $[\SC]$ consists of the sets $U_{[C]}=\{[C']\in [\SC]\ :\ [C']\succcurlyeq [C]\}$ based at $[C]\in [\SC]$, so we show the preimage of any such set is open in $\Ran(M)\times \R_{\geqslant 0}$. Take any $(P,r)\in [\check C]^{-1}(U_{[C]})$, with $P=\{P_1,\dots,P_k\}$, which we will show has an open neighborhood contained in $[\check C]^{-1}(U_{[C]})$. For every $P'\subseteq P$, let
\label{notation:cech-set-radius}\index{Cech@\v Cech!radius}\index{Cech@\v Cech!set}
\begin{align}
\label{notation:cech-set} \check cs(P') & \colonequals \textstyle \bigcap_{p\in P'} \overline B(p,\inf\{r : \bigcap_{p'\in P'}\overline B(p',r)\neq\emptyset\}) \subseteq M, \\
\label{notation:cech-radius} \check cr(P',r) &\textstyle \colonequals r - d_M(P', \check cs(P')) \in \R
\end{align}
be the \emph{\v Cech set}\footnote{This can be thought of as the circumcenter of some subset of $P$, whose size is restricted by $\dim(M)$ and whose choice is restricted by its convex hull.} of $P'$ and \emph{\v Cech radius} of $P'$ at $r$, respectively\footnote{These two constructions are related by the equation $\check cr(P',d_M(P',\check cs(P')))=0$.}. The \v Cech set is the smallest non-empty intersection of the closed balls on $M$ of increasing radius around $P'$. The $\inf$ is necessary when $\arrowvert P'\arrowvert = 1$, otherwise the minimum always exists, as the balls are closed and $M$ is connected. The \v Cech radius is positive if and only if the intersection $\bigcap_{p\in P'} \overline B(p,r)$ contains an open set of $M$, negative when the intersection is empty, and 0 otherwise. 

\emph{Case 1:} For every $P'\subseteq P$ with $\arrowvert P'\arrowvert >1$, $\check cr(P',r)\neq 0$. Let $B_\infty((P,r),\tilde r/4)$ be the open ball in the sup-norm on the product $\Ran(M)\times \R_{\geqslant 0}$ around $(P,r)$ of radius $\tilde r/4$, where $\tilde r$ is the smallest of the two values
\begin{align}
\label{r1} r_1 & \colonequals \min_{1\leqslant i<j\leqslant k} d(P_i,P_j), \\
\label{r2} r_2 & \colonequals \min_{P'\subseteq P,\ \arrowvert P'\arrowvert >1} 2 \arrowvert \check cr(P',r)\arrowvert.
\end{align}
Briefly, having $\tilde r\leqslant r_1$ guarantees that points will not merge in the open ball, and having $\tilde r \leqslant r_2$ guarantees that simplices among the $P_i$ are neither lost nor gained in the open ball. Figure \ref{continuity-figure} illustrates these roles of $r_1$ and $r_2$.

\begin{figure}[h]\centering
\begin{tikzpicture}[scale=1.2]
% coordinates and disks
\foreach \x\y\z in {0/0/a, -.3/.3/b, 3/0/c, 4.2/-.5/d}{
  \coordinate (\z) at (\x,\y);
  \fill[uicblue,opacity=.5] (\z) circle (1);
}
% distances
\draw[line width=\bdwid,uicred] (a)--(b);
\draw[line width=\bdwid,uicred] ($(a)+(0:1)$)--($(c)+(180:1)$);
\draw[line width=\bdwid,uicred,shorten <=.3cm, shorten >=.3cm] (c)--(d);
% points and circles
\foreach \n in {a,b,c,d}{
  \fill[black] (\n) circle (.05);
  \draw[densely dashed,uicblue,line width=\bdwid] (\n) circle (1);
}
% labels
\node[anchor=270] at (b) {$P_1$};
\node[anchor=60] at (a) {$P_2$};
\node[anchor=south] at (c) {$P_3$};
\node[anchor=west] at (d) {$P_4$};
\node at ($(a)+(270:1.4)$) {$B(P_2,r)$};
\node at ($(b)+(180:1.9)$) {$B(P_1,r)$};
\node at ($(c)+(250:1.49)$) {$B(P_3,r)$};
\node at ($(d)+(0:1.9)$) {$B(P_4,r)$};
% inequalities
\node[anchor=west] (d34) at (6,.7) {length $\geqslant r_2$};
\node[anchor=west] (d23) at (6,1.2) {length $\geqslant r_2$};
\node[anchor=west] (d12) at (6,1.7) {length $\geqslant r_1$};
% connectors
\draw[->,shorten >=.1cm] (d34.west) to [out=180, in=60] ($(c)!.5!(d)$);
\draw[->,shorten >=.1cm] (d23.west) to [out=180, in=80] ($(c)!.5!(a)$);
\draw[->,shorten >=.1cm] (d12.west) to [out=180, in=45] ($(b)!.5!(a)$);
\end{tikzpicture}
\caption{A finite subset of $M$ and open balls in $M$ around its elements.}
\label{continuity-figure}
\end{figure}

Let $(Q,s)\in B_\infty((P,r),\tilde r/4)$. Since $\tilde r\leqslant r_1$, we have that $d_H(P,Q)< \tilde r/4$, which implies that $Q\subseteq \bigcup_{i=1}^k B (P_i,\tilde r/4)$. Similarly, the $B(P_i,\tilde r/4)$ are disjoint. Also, for every $1\leqslant i\leqslant k$, note that $Q\cap B(P_i,\tilde r/4)\neq\emptyset$, as
\begin{equation}\label{eqn:distance-bound}
d_M(\{P_i\},Q) = \min_{q\in Q} d(P_i,q) \leqslant d_H(P,Q) \leqslant d_\infty((P,r),(Q,s))<\tilde r/4.
\end{equation}
In other words, there is a well-defined and surjective map $\phi\colon Q\to P$ for which $\phi(q)=P_i$ whenever $q\in B(P_i,\tilde r/4)$.

Next, we claim $\phi$ is a simplicial map. Take $Q'\subseteq Q$ and suppose that $\check C(Q',s)$ is a $(\arrowvert Q'\arrowvert -1)$-simplex. Let $P'=\{P'_0,\dots,P'_\ell\}\subseteq P$ be such that $Q'\subseteq \bigcup_{i=1}^\ell B (P'_i,\tilde r/4)$ and $Q\cap B(P'_i,\tilde r/4) \neq \emptyset$, for $1\leqslant i\leqslant \ell$. Suppose, for contradiction, that $\check C(P',r)$ is not a $(\arrowvert P'\arrowvert-1)$-simplex, or equivalently, that $\check cr(P',r)<0$. Then
\begin{align*}
0 & \geqslant \check cr (P',r) + \tilde r/2 & (\text{by \eqref{r2} and that $\tilde r\leqslant r_2$}) \\
& = r-d_M(P',\check cs(P')) + \tilde r/2 & (\text{by definition of \v Cech radius})\\
& > r-d_M(Q',\check cs(Q')) - \tilde r/4 + \tilde r/2 & (\text{since $d_H(P,Q) <\tilde r/4$}) \\
& \geqslant s-\arrowvert s-r\arrowvert - d_M(Q',\check cs(Q')) + \tilde r/4 \\
& > \check cr(Q',s) - \tilde r/4 + \tilde r/4 & (\text{since $\arrowvert s-r\arrowvert < \tilde r/4$})\\
& = \check cr(Q',s),
\end{align*}
contradicting the assumption that $\check cr(Q',s)\geqslant 0$, as $\check C(Q',s)$ was assumed to be a $(\arrowvert Q'\arrowvert - 1)$-simplex. Hence $\check C(P',r)$ is a $(\arrowvert P'\arrowvert-1)$-simplex, and so the image of $\check C(Q',s)$ under $\phi$ is the simplex $\check C(P',r)$. Since simplices get taken to simplices, the map $\phi\colon Q\to P$ extends to a simplicial map $\check C(Q,s)\to \check C(P,r)$ that is surjective on vertices. That is, $[\check C(Q,s)] \succcurlyeq [\check C(P,r)] = [C] $, and so $B_\infty((P,r),\tilde r/4) \subseteq [\check C] ^{-1}(U_{[C]})$, meaning that $[\check C]^{-1}(U_{[C]})$ is open.

\emph{Case 2:} There is some $P'\subseteq P$ with $\arrowvert P'\arrowvert >1$ and $\check cr(P',r)=0$. Then $r_2=0$ from \eqref{r2}, so let
\begin{equation}
\label{r2f} r_2' \colonequals \min_{P'\subseteq P,\ \check cr(P',r)\neq 0}2\arrowvert \check cr(P',r)\arrowvert,
\end{equation}
and let $\tilde r$ be the smallest of the two values $r_1$ and $r_2'$. As in Case 1, we claim the open neighborhood $B_\infty((P,r),\tilde r/4)$ of $(P,r)$ is contained within $[\check C]^{-1}(U_{[C]})$. The proof of this claim proceeds as in the first case: the only place that $r_2$ was used was to state that $0\geqslant \check cr(P',r)+\tilde r/2 $, in showing that $\check C(P',r)$ is indeed a $(\arrowvert P'\arrowvert -1)$-simplex. If $\check cr(P',r)=0$, then we already have this conclusion, and it is unnecessary to get to the contradiction. That is, $\phi$ still extends to a simplicial map, and $[\check C]^{-1}(U_{[C]})$ is open in this case as well. 
\end{proof}

It follows that $[\check C]$ is a $[\SC]$-stratification of $\Ran(M)\times \R_{\geqslant 0}$. Moreover, $[\check C]$ is a refinement of the point-counting stratification $\Ran(M)\times \Z_{>0}$, by viewing $\Z_{> 0}$ as a subposet of discrete simplicial complexes of $[\SC]$ by the map $n\mapsto (\{1,\dots,n\}, \{\{1\},\dots,\{n\}\})$.

However, Lemma \ref{lemma:conical-implies-frontier} implies that $[\check C]$ is not a conical stratification.

\begin{example}\label{ex:conical-problem}
Consider the space of at most 2 points $\Ran_{\leqslant 2}(I)$ on the unit interval $I$, and the space $X = \Ran_{\leqslant 2}(I)\times \R_{\geqslant 0}$, as shown in Figure \ref{fig:interval-strata}. Take $x=(\{p_1,p_2\},r)\in X$, with $p_1=0, p_2=\frac12$ and $r=\frac14$. For $y=(\{p_1,p_2\},r'<r)$, note that 
\begin{equation}
a =\ \usebox{\twop}\ = \ [\check C](y)\ \succcurlyeq\ [\check C](x)\ =\ \usebox{\twopconn}\  = b.
\end{equation}
Moreover, $y$ is in the closure of of both $X_b$ and $X_a$, that is, $(\overline{X_a}\setminus X_a) \cap X_b \neq \emptyset$. However, for $z=(\{p_1,p_2\},r''>r)$ we see that $z\in X_b$ and $z\not\in X_a$, meaning that $X_b \not \subseteq \overline {X_a}$. Hence $[\check C]$ does not satisfy the frontier condition, and so by Lemma \ref{lemma:conical-implies-frontier} cannot be a conical stratification.
\end{example}

\begin{figure}[h]\centering
\tdplotsetmaincoords{75}{115}
\begin{tikzpicture}[xscale=3,yscale=2.4]

%\coordinate (t) at (0,1);

%\node[anchor=north,yshift=2pt] at (t) {space $Z\times C(L)$};

\begin{scope}[tdplot_main_coords]
% unstratified
\draw[uicdblue,line width=\bdwid] (0,0,0)--(-1,1,0) (0,0,1)--(-1,1,1);
\fill[uicblue,fill opacity=.5] (0,0,0) -- (0,1,0) -- (-1,1,0) -- (-1,1,1) -- (0,0,1);
\draw[uicdblue,line width=\bdwid] (0,1,0) -- (0,0,0) -- (0,0,1) -- (0,1,1) -- (0,1,0) -- (-1,1,0) -- (-1,1,1) -- (0,1,1);
\draw[|-|] (.3,.1,-.05) to node[auto,swap] {$I$} (.3,1.1,-.05);
\draw[|-|] (0,1.05,-.1) to node[auto,swap] {$I$} (-1,1.05,-.1);
\draw[|-|] (0,-.1,0) to node[auto] {$[0,\frac12]$} (0,-.1,1);
% stratified
\begin{scope}[shift={(-1,1.5,0)}]
%% strat back
\fill[uicred, fill opacity=.5] (0,0,0) -- (-1,1,0) -- (-1,1,1) -- (0,0,1) -- cycle;
\draw[uicred, line width=\bdwid] (0,0,0) -- (-1,1,0) -- (-1,1,1) -- (0,0,1) -- cycle;
%% strat mid
\begin{scope}[shift={(.2,.2,0)}]
\draw[uicdblue, line width=\bdwid, densely dashed] (0,0,0) -- (-1,1,0) -- (-1,1,1) -- (0,0,1) -- cycle;
\fill[uicblue, fill opacity=.5] (0,0,0) -- (-1,1,0) -- (-1,1,1) -- (0,0,1) -- cycle;
\draw[uicdblue, line width=\bdwid] (0,0,1) -- (0,1,1) -- (-1,1,0) (0,0,0) -- (0,1,1) -- (-1,1,1);
\end{scope}
%% strat front
\begin{scope}[shift={(.2,.2,-.1)}]
\draw[uicdblue, line width=\bdwid, densely dashed] (0,0,0) -- (-1,1,0) -- (0,1,1) -- cycle;
\fill[uicblue, fill opacity=.5] (0,0,0) -- (0,1,0) -- (-1,1,0) -- (0,1,1) -- cycle;
\draw[uicdblue, line width=\bdwid] (0,0,0) -- (0,1,0) -- (-1,1,0) (0,1,0) -- (0,1,1);
\end{scope}
%% points
\node (z) at (0,.5,.6) {$\bullet$};
\node (x) at (0,.5,.34) {$\bullet$};
\node (y) at (0,.5,.05) {$\bullet$};
\foreach \n\anch in {x/east, y/west, z/east}{
  \node[anchor=\anch,inner sep=5pt] at (\n) {$\n$};
}
\end{scope}
%% labels
\node[anchor=west] at (-.5,3.1,0) {$X_a = [\check C]^{-1}(\usebox{\twop})$};
\node[anchor=west] at (-1,3.15,.6) {$X_b = [\check C]^{-1}(\usebox{\twopconn})$};
\end{scope}
\end{tikzpicture}
\caption{The stratified space $\Ran_{\leqslant 2}(I)\times \R_{\geqslant 0}$ decomposed into its strata (right).}
\label{fig:interval-strata}
\end{figure}

One solution is to make a new stratum for points similar to $x$ in Example \ref{ex:conical-problem}. That is, for every $[C]\in [\SC]$, declare a new stratum $S_{[C]} = \{(P,r)\in \Ran(M)\times \R_{\geqslant 0} : [\check C](P,r) = [C], \check cr(P,r) = 0\}$.

\begin{definition}\label{def:frontier-sc}
A \emph{frontier simplicial complex} $C$ is defined by its \emph{vertices}, \emph{simplices}, and \emph{frontier simplices}, that is, a triple of sets $(V(C), S(C), F(C))$ with $(V(C), S(C))$ a simplicial complex and $F(C)\subseteq S(C)$ closed under taking supersets. 
\end{definition}

By ``closed under taking supersets'' we mean $\sigma \in F(C)$ implies $\tau \in F(C)$ whenever $\sigma\subseteq \tau$ and $\tau \in S(C)$. A map of frontier simplicial complexes $(V,S,F)\to (V',S',F')$ is defined analogously to a map of simplicial complexes. That is, we require it to be a map on the vertices $V\to V'$ which must induc a map on simplices $S\to S'$ and on frontier simplices $F\to F'$.

Given a pair $(P,r)\in \Ran(M)\times \R_{\geqslant 0}$, augmenting $\check C(P,r)$ with the set $F$ such that $P'\in F$ whenever $P'\in S(\check C(P,r))$ and $\check cr(P',r)=0$ defines a frontier simplicial complex. This follows as the \v Cech radius is 0 when the intersection of closed $r$-balls around the elements of $P$ is non-empty but does not contain an open set. %The existence of a larger simplex $P\cup \{p\}$ increases the number of intersections, but cannot increase the intersection size.

\begin{figure}[ht]\centering
\newcommand\xscale{1.4}
\newcommand\yscale{1.3}
\newcommand\rscale{.3}
\begin{tikzpicture}[
  SCedge/.style={uicred,line width=1pt},
  FSCedge/.style={uicred,line width=1pt,densely dotted},
  connector/.style={->,shorten >=14pt, shorten <=14pt}
]
% vertex coordinates
\foreach \x in {0,...,8}{
  \foreach \r\lab in {-30/a, 90/b, 210/c}{
    \coordinate (\x\lab) at ($(\x*\xscale,0)+(\r:\rscale)$);
  }
}
\foreach \r\lab in {-30/a, 90/b, 210/c}{
  \coordinate (99\lab) at ($(-1*\xscale,0)+(\r:\rscale)$);
  \coordinate (9\lab) at ($(2*\xscale,-1*\yscale)+(\r:\rscale)$);
  \coordinate (10\lab) at ($(4*\xscale,-1*\yscale)+(\r:\rscale)$);
  \coordinate (11\lab) at ($(3*\xscale,-1*\yscale)+(\r:\rscale)$);
  \coordinate (12\lab) at ($(3*\xscale,-2*\yscale)+(\r:\rscale)$);
  \coordinate (13\lab) at ($(4*\xscale,-2*\yscale)+(\r:\rscale)$);
  \coordinate (14\lab) at ($(5*\xscale,-2*\yscale)+(\r:\rscale)$);
}
\foreach \r\lab in {0/a, 180/b}{
  \coordinate (115\lab) at ($(0,-3*\yscale)+(\r:\rscale*.9)$);
  \coordinate (15\lab) at ($(1*\xscale,-3*\yscale)+(\r:\rscale*.9)$);
  \coordinate (16\lab) at ($(2*\xscale,-3*\yscale)+(\r:\rscale*.9)$);
  \coordinate (17\lab) at ($(3*\xscale,-3*\yscale)+(\r:\rscale*.9)$);
}
\coordinate (18a) at (3*\xscale,-4*\yscale);
\coordinate (19a) at (2*\xscale,-4*\yscale);
% faces
\fill[uicblue!50] (8a)--(8b)--(8c);
\foreach \x in {7,12,13,14}{
  \draw[draw=none,pattern=crosshatch,pattern color=uicblue!50] (\x a)--(\x b)--(\x c);
}
% edges
\foreach \x\y in {2b/2c, 3b/3c, 4a/4b, 4b/4c, 5a/5b, 5b/5c, 6a/6b, 6b/6c, 6c/6a, 7a/7b, 7b/7c, 7c/7a, 8a/8b, 8b/8c, 8c/8a, 10b/10c, 13b/13c, 14a/14b, 14b/14c, 17a/17b}{
  \draw[SCedge] (\x)--(\y);
}
\foreach \x\y in {1b/1c, 3a/3b, 5c/5a, 9a/9b, 9b/9c, 10a/10b, 10a/10c, 11a/11b, 11b/11c, 11a/11c, 12a/12b, 12b/12c, 12a/12c, 13a/13b, 13a/13c, 14a/14c, 16a/16b}{
  \draw[FSCedge] (\x)--(\y);
}
% vertices
\foreach \x in {0,...,14}{
  \foreach \r in {a, b, c}{
    \fill (\x\r) circle (.05);
  }
}
\foreach \x in {15,16,17}{
  \foreach \r in {a, b}{
    \fill (\x\r) circle (.05);
  }
}
\fill (18a) circle (.05);
\foreach \n in {99a, 99b, 99c, 115a, 115b, 19a}{
  \draw[line width=1pt] (\n) circle (.07);
}
% arrows
\foreach \x\y in {0/{-1}, 0/1, 2/1, 2/3, 4/3, 4/5, 6/5, 6/7, 8/7}{
  \draw[connector] (\x*\xscale,0)--(\y*\xscale,0);
}
\foreach \x\y\z\w in {
	1/0/2/-1, 3/0/2/-1, 3/0/4/-1, 5/0/4/-1, 7/0/5/-2, 5/0/5/-2,
	2/-1/3/-1, 4/-1/3/-1, 3/-1/3/-2, 4/-1/4/-2,
	4/-2/3/-2, 5/-2/4/-2,
	1/-3/2/-3, 3/-3/2/-3, 1/-3/0/-3, 3/-3/3/-4, 
	3/-4/2/-4
}{
  \draw[connector] (\x*\xscale,\y*\yscale)--(\z*\xscale,\w*\yscale);
} 
\draw[->,shorten >=12pt, shorten <=12pt,rounded corners=20pt] (2*\xscale,0) -- (1*\xscale,-1*\yscale) -- (1*\xscale,-3*\yscale);
\draw[->,shorten >=18pt, shorten <=12pt,rounded corners=20pt] (8*\xscale,0) -- (5*\xscale,-3*\yscale) -- (3*\xscale,-3*\yscale);
\end{tikzpicture}
\caption{Enrichment of Figure \ref{fig:hasse} by frontier simplices, with arrows indicating simplicial maps and decreasing partial order in $[\FSC]$. Frontier simplices are drawn as circles, dotted edges, hatched faces. Frontier simplicial complexes not in the image of the \v Cech map to $[\FSC]$ from Observation \ref{rem:frontier-simps} are not shown.
% with some but not all vertices as frontier simplices
 %There are no relations among elements of $[\FSC]\setminus [\SC]$ with different numbers of vertices.
}
\label{fig:hasse-frontier}
\end{figure}

% on $P$ with radius $r$ is the simplicial complex with vertices $P$, and $P'\subseteq P$ a simplex whenever $\bigcap_{p\in P'} B(p,r) \neq \emptyset$. This construction gives what we call the \emph{\v Cech map} $\check C\colon \Ran(M)\times \R_{\geqslant 0}\to \SC$.

\begin{remark} \label{rem:frontier-simps}
Let $\FSC$ be the set of frontier simplicial complexes, for which we say $(V,S,F) = C\cong C'=(V',S',F')$ whenever $(V,S)\cong (V',S')$, and the isomorphism on vertices induces an isomorphism $F\cong F'$. It follows that:
\begin{itemize}
\item The \v Cech map factors as $\Ran(M)\times \R_{\geqslant 0} \to \FSC \to \SC$, first following the construction above, then forgetting frontier simplices.
\item The set $[\FSC]\colonequals \FSC_{/\cong}$ has a partial order by letting $[C]\succcurlyeq [C']$ whenever there is a simplicial map $C\to C'$ that is surjective on vertices and injective on frontier simplices. %An example of this is given in Figure \ref{figure:partial-orders}.
\item The induced map $\Ran(M)\times \R_{\geqslant 0} \to [\FSC]$ is continuous.
\end{itemize}
The last statement follows as the proof of Theorem \ref{thm:cech-continuous} was split up into two cases where the \v Cech radius is and is not zero, so all that remains is to keep track of the frontier simplices throughout the proof.
\end{remark}

\begin{conjecture}\label{conj:frontier-con}
The induced map  $\Ran(M)\times \R_{\geqslant 0} \to [\FSC]$ is a conical stratification.
\end{conjecture}

\begin{remark}\label{rem:conj-obs}
We mention two observations to support Conjecture \ref{conj:frontier-con}.
\begin{itemize}
\item This stratification does not immediately violate the frontier condition on path-connected components of strata like $[\check C]$ does.
\item Each 1-dimensional frontier simplex of $C\in [\FSC]$ decreases the dimension of an open neighborhood in the stratum of $C$, relative to $C\in [\SC]$.
\end{itemize}
The second statement implies Example \ref{ex:conical-problem} cannot be immediately used with this stratification. However, the statement only seems to hold up to some relationship between $|V(C)|$ and the dimension of an open neighborhood of $V(C)\subseteq M$.
\end{remark}

For a clearer result, we restrict to semialgebraic sets and fix an upper bound $n\in \Z_{>0}$. We also employ some results about the algebra of semialgebraic sets, specifically that products \cite[I.2.9.1]{MR1463945}, quotients \cite[Corollary 1.5]{MR888127}, sub-semialgebraic sets \cite[Theorem 9.1.6]{MR1659509}, and images via semialgebraic maps \cite[I.2.9.11]{MR1463945} are all semialgebraic.

The function $[\check C]$ now refers to the restriction of $[\check C]$ to $\Ran_{\leqslant n}(M)\times \R_{\geqslant 0}$.

\begin{theorem}\label{thm:conical-existence}
If $M$ is semialgebraic, there exists a conical semialgebraic stratification of $\Ran_{\leqslant n}(M)\times \R_{\geqslant 0}$ compatible with $[\check C]$.
\end{theorem}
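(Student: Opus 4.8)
The plan is to reduce Theorem \ref{thm:conical-existence} to Lemma \ref{lemma:conical-compatible} by exhibiting $\Ran_{\leqslant n}(M)\times \R_{\geqslant 0}$ as a closed semialgebraic set carrying a semialgebraic stratification that refines $[\check C]$. The first step is to identify the ambient semialgebraic structure. Since $M\subseteq \R^N$ is semialgebraic, the configuration space $\Conf_k(M)$ embeds in $M^k\subseteq \R^{Nk}$, and the quotient map $M^k\to \Ran_{\leqslant n}(M)$ identifying permutations of tuples and collisions is semialgebraic; alternatively one works directly with the ordered model $\bigsqcup_{k\leqslant n}\{(x_1,\dots,x_k)\in M^k\}\times \R_{\geqslant 0}$ and descends at the end. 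The key observation is that for each $k\leqslant n$ and each fixed isomorphism type of simplicial complex $[C]$ on a labeled vertex set $\{1,\dots,k\}$, the condition ``$\check C((x_1,\dots,x_k),r)$ has exactly the simplices of $C$'' is a finite Boolean combination of conditions of the form $\bigcap_{i\in P'}B(x_i,r)\neq\emptyset$ and $\bigcap_{i\in P'}B(x_i,r)=\emptyset$. Each such condition is semialgebraic: $\bigcap_{i\in P'}B(x_i,r)\neq\emptyset$ is equivalent to the existence of $y\in\R^N$ with $\|y-x_i\|^2\leqslant r^2$ for all $i\in P'$, and by quantifier elimination (Tarski–Seidenberg) the projection of this semialgebraic set in the $(x,r,y)$-variables to the $(x,r)$-variables is semialgebraic.

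The second step is to build the stratification $f$ on $\Ran_{\leqslant n}(M)\times\R_{\geqslant 0}$ whose strata are the sets cut out by fixing (i) the number of points $k$, (ii) the isomorphism class $[\check C]$, and (iii) whether $\check cr(P,r)=0$ or $\check cr(P,r)\neq 0$ — this is exactly the refinement $S_{[C]}$ discussed just before the theorem statement, intersected with $\Ran_{\leqslant n}(M)$. By the previous paragraph each stratum is semialgebraic, there are finitely many of them, they partition the space, and the stratifying poset is a poset refining $([\SC],\succcurlyeq)$ (one orders the refined strata compatibly with $\succcurlyeq$ and with the rule that the $\check cr=0$ stratum sits between the open and closed strata it separates, as in Example \ref{ex:conical-problem}). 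One must check $f$ is continuous into this poset with the upset topology, i.e. that the closure of each stratum meets only strata of lower or equal index; for the $k$-counting this is standard, for the $[\SC]$-part it follows from the argument of Theorem \ref{thm:cech-continuous}, and the new $\check cr=0$ strata were introduced precisely to make closures behave. Then $f$ is a semialgebraic stratification refining $[\check C]$ in the sense defined before Lemma \ref{lemma:conical-compatible}.

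The third step is to verify that $\Ran_{\leqslant n}(M)\times\R_{\geqslant 0}$ is a \emph{closed} semialgebraic set, so that Lemma \ref{lemma:conical-compatible} applies. If $M$ itself is closed in $\R^N$ this is immediate; if not, one may either pass to a semialgebraic compactification or note that Lemma \ref{lemma:conical-compatible}'s source (triangulation of semialgebraic sets, \cite{MR1463945}) applies to locally closed semialgebraic sets after a standard modification, or simply add the hypothesis implicitly used elsewhere that $M$ is closed. Granting closedness, Lemma \ref{lemma:conical-compatible} produces a conical semialgebraic stratification $g$ of $\Ran_{\leqslant n}(M)\times\R_{\geqslant 0}$ compatible with $f$; since ``compatible with'' is transitive and $f$ refines $[\check C]$, the stratification $g$ is compatible with $[\check C]$, which is the claim. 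The cone structure on strata comes for free from the triangulation, as in the proof of Lemma \ref{lemma:conical-compatible}.

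The main obstacle I expect is the semialgebraicity of the Ran space itself together with the descent of everything through the collision-and-permutation quotient: $\Ran_{\leqslant n}(M)$ is not a manifold and its natural description is as a quotient of $M^{\leqslant n}$, so one must be careful that the quotient topology agrees with a semialgebraic subset structure (it does, via the Hausdorff-distance metric, but this needs the argument that $d_H$ on finite subsets of bounded cardinality is a semialgebraic function of the ordered representatives) and that the strata defined upstairs are permutation-invariant and descend to semialgebraic strata downstairs. A secondary subtlety is checking that the poset one puts on the refined strata is genuinely a poset (antisymmetry) and that $f$ into it is continuous — but this is exactly the content already established for $[\check C]$ in Theorem \ref{thm:cech-continuous}, with only the bookkeeping for the $\check cr=0$ strata added.
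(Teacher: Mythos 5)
Your core route is the paper's: show $\Ran_{\leqslant n}(M)\times\R_{\geqslant 0}$ and the fibers of $[\check C]$ are semialgebraic, then invoke Lemma \ref{lemma:conical-compatible}. Two of your variations are fine and arguably improvements: you establish semialgebraicity of the strata by writing each condition $\bigcap_{i\in P'}B(x_i,r)\neq\emptyset$ as a projection and applying Tarski--Seidenberg, where the paper instead uses semialgebraicity of the distance function to the \v Cech set $\check cs(P')$; and you flag that Lemma \ref{lemma:conical-compatible} is stated for \emph{closed} semialgebraic sets, a hypothesis the paper's proof does not address when applying the lemma (your suggested fixes --- locally closed triangulation or a closedness assumption on $M$ --- are the right ones).

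The one genuine problem is your second step. You interpose the refinement by the sign of $\check cr(P,r)$ (the strata $S_{[C]}$ from the discussion after Example \ref{ex:conical-problem}) and then apply Lemma \ref{lemma:conical-compatible} to that refined map $f$ rather than to $[\check C]$ itself. But the lemma only needs \emph{some} semialgebraic stratification as input --- it manufactures the conical refinement for you via triangulation --- so the detour is unnecessary, and it is not free: to use $f$ as the input you must verify that $f$ is continuous into a genuinely partially ordered refinement of $([\SC],\succcurlyeq)$, which you assert (``the new $\check cr=0$ strata were introduced precisely to make closures behave'') but do not prove. This is exactly the point the paper declines to settle: it says it is ``not immediately clear'' whether that refinement works, and restricts to the semialgebraic setting precisely to avoid having to check it. As written, your argument rests on an unverified claim; the repair is simply to delete the refinement and feed $[\check C]$ (continuous by Theorem \ref{thm:cech-continuous}, with semialgebraic fibers by your quantifier-elimination argument) directly into Lemma \ref{lemma:conical-compatible}, using transitivity of compatibility only if you insist on keeping the intermediate step.
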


\begin{proof}
Since $M$ is semialgebraic, \cite[I.2.9.1]{MR1463945} gives that $M^n$ is semialgebraic. By describing $\Ran_{\leqslant n}(M)$ as a quotient of $M^n$ by semialgebraic equivalence relations, \cite[Corollary 1.5]{MR888127} gives that $\Ran_{\leqslant n}(M)$ is semialgebraic. Again by  \cite[I.2.9.1]{MR1463945} we get that $\Ran_{\leqslant n}(M) \times \R_{\geqslant 0}$ is semialgebraic.

Now we show the strata are semialgebraic sets. Consider the set $[\check C]^{-1}([C]) \subseteq \Ran_{\leqslant n}(M)\times \R_{\geqslant 0}$, which is defined by functions which use the distance from a point $(P,r)$ to its \v Cech set $\check cs(P)$. The \v Cech set, from \eqref{notation:cech-set}, is a semialgebraic set, as it is the intersection of balls, and the function that measures distance to a semialgebraic set is also semialgebraic, by \cite[I.2.9.11]{MR1463945}. Finally, a subset of a semialgebraic set defined by semialgebraic functions on the first set is itself semialgebraic in $\R^N$, by \cite[Theorem 9.1.6]{MR1659509}. Hence $[\check C]^{-1}([C])$ is semialgebraic, so $[\check C]$ is a semialgebraic stratification of $\Ran_{\leqslant n}(M)\times \R_{\geqslant 0}$. Apply Lemma \ref{lemma:conical-compatible} to get a conical semialgebraic stratification of $\Ran_{\leqslant n}(M)\times \R_{\geqslant 0}$ compatible with $[\check C]$.
\end{proof}

\subsection{Stratifying paths}
\label{sec:strat-paths}

For $X$, a topological space, recall $\Sing(X)$ is the simplicial set of continuous maps $\arrowvert \Delta^k\arrowvert\to X$, where $\Delta^k$ is the standard $k$-simplex. Let $A$ be a poset and $f\colon X\to A$ a stratification.

\begin{definition}
An \emph{entrance path} in $X$ is a continuous map $\sigma\colon \arrowvert \Delta^k\arrowvert \to X$ for which there exists a chain $a_0\leqslant \cdots \leqslant a_k$ in $A$ such that $f(\sigma(0,\dots,0,t_i,\dots,t_k))=a_{k-i}$ and $t_i\neq 0$, for all $i$.
\end{definition}

\begin{figure}[ht]\centering
\begin{tikzpicture}[
  scale=1.2,
  SCedge/.style={uicred,line width=1pt, shorten <=4pt, shorten >=4pt}
]
% 1-simplex
\coordinate (x) at (1,0);
\coordinate (y) at (0,1);
\draw[->] (-.1,0)--(1.5,0);
\draw[->] (0,-.1)--(0,1.5);
\draw (x)--(y);
\foreach \n in {x,y}{
  \fill (\n) circle (.05);
}
\node[anchor=north] at (x) {$(1,0)$};
\node[anchor=east] at (y) {$(0,1)$};
% 2-simplex
\begin{scope}[shift={(3.75,0)}]
\coordinate (x) at (1,0);
\coordinate (y) at ($(0,0)+(240:.6)$);
\coordinate (z) at (0,1);
\node[anchor=210,fill=white] at (x) {$(1,0,0)$};
\node[anchor=-10,fill=white,fill opacity=.75,text opacity=1] at (y) {$(0,1,0)$};
\node[anchor=east,fill=white,fill opacity=.75,text opacity=1] at (z) {$(0,0,1)$};
\draw[->] (-.1,0)--(1.5,0);
\draw[->] (0,-.1)--(0,1.5);
\draw[->] (0,0)--++(240:1);
\draw (0,0)--++(45:.1);
\draw[fill=gray!30,fill opacity=.8] (x)--(z)--(y)--(x);
\foreach \n in {x,y,z}{
  \fill (\n) circle (.05);
}
\end{scope}
% image
\begin{scope}[shift={(7.5,-.7)}]
\foreach \x\y\n in {0/0/a, 1/0/b, 0/1/c, 1/1/d}{
  \coordinate (\n) at (\x*2,\y*2);
  \fill[black] (\n) circle (.05);
}
\foreach \x\y in {a/b, b/d, a/c, c/d}{
  \draw[SCedge] (\x)--(\y);
}
\fill[uicblue!50] (.1,.1) rectangle (1.9,1.9);
\draw[uicblue,densely dashed] (.1,.1) rectangle (1.9,1.9);
% entrance paths
\coordinate (x) at (0,2);
\coordinate (y) at (0,.4);
\coordinate (z) at (1.2,.9);
\coordinate (w) at (1.7,2);
\coordinate (p) at (.8,.3);
\draw[fill=gray!30,fill opacity=.5] (x)--(y)--(z)--(x);
\draw (p) ..controls +(10:.9) and +(270:1) ..  (w);
\foreach \x in {y,z,w,p}{
  \fill[black] (\x) circle (.05);
}
\node[anchor=east] at (x) {$\sigma(0,0,1)$};
\node[anchor=east] at (y) {$\sigma(0,1,0)$};
\node[anchor=315] at (w) {$\gamma(0,1)$};
\end{scope}
\end{tikzpicture}
\caption{Geometric realizations $|\Delta^1|$ (left), $|\Delta^2|$ (center), and their images as entrance paths $\gamma, \sigma$, respectively, in a stratified square (right).}
\label{fig:entrance-path}
\end{figure}

Contrast this with the more common definition of an \emph{exit path}, as in \cite{MR2575092}, which is the same, but with  $f(\sigma(t_0,\dots,t_i,0,\dots,0))=a_i$ and $t_i\neq 0$, for all $i$.  The choice of ``entrance" instead of ``entry" comes from interpreting ``exit" as a noun rather than a verb. 

The subsimplicial set of $\Sing(X)$ of all entrance paths is denoted $\Sing_A(X)$. In this context, a very roundabout way of defining the \v Cech map $\check C$ from Definition \ref{def:cech-complex} would be as an assignment
\begin{equation}
\Cech_0 \colon \Sing_{[\SC]}(\Ran(M)\times \R_{\geqslant 0})_0 \to N(\SCcat)_0
\end{equation}
of 0-simplices, where $N(-)$ is the nerve. This description is useful, however, when generalizing from points (0-simplices) to paths (1-simplices), in which case we only have to change the subscripts from 0 to 1.

\begin{construction}\label{constr:entrance-path}
For an entrance path $\gamma\colon |\Delta^1| \to \Ran(M)\times \R_{\geqslant 0}$, we have
\[
[\check C](\gamma(0,1)) = C,
\hspace{1cm}
[\check C](\gamma(t,1-t)) = C'\ \forall\ t\in (0,1],
\]
and $C'\succcurlyeq C$. Since $[\check C](\gamma(t,1-t))$ is constant for all $t\in (0,1]$, the image of $\gamma$ is in $\Conf_k(M)\times \R_{\geqslant 0}$ for all $t\in (0,1]$ and $k=|V(C')|$. That is, there are paths $\gamma_i \colon |\Delta^1| \to M$ for $i=1,\dots,k$, unique up to reindexing, such that the diagram
\begin{equation}
\begin{tikzpicture}[xscale=3,baseline=.5cm]
\node (a) at (0,0) {$|\Delta^1|$};
\node (m) at (2,0) {$M^k$};
\node (r) at (1,1) {$\Ran(M)\times \R_{\geqslant 0}$};
\draw[->] (a) to node[auto] {$\gamma$} (r);
\draw[->] (a) to node[below] {$\gamma_1,\dots,\gamma_k$} (m);
\draw[->] (r)--(m);
\end{tikzpicture}
\end{equation}
commutes. The $\gamma_i$ define a map from the vertices of $\check C(\gamma(1,0))$ to $\check C(\gamma(0,1))$, which in turn defines a simplicial map from $\check C(\gamma(1,0))$ to $\check C(\gamma(0,1))$ that is surjective on vertices. Call this simplicial map 
%Fixing an order on elements of the $\Ran(M)$ component in $\gamma(1,0)$ uniquely orders the $\gamma_i$. 
% $\gamma$ descends to a  exist paths $\gamma_i \colon |\Delta^1| \to M$ for $i=1,\dots,|V(C)|$, unique up to reindexing.  and 
% The simplicial map
\begin{equation}
\Cech_1(\gamma)\in \Hom_\SCcat(\check C(\gamma(1,0)),\check C(\gamma(0,1))).
\end{equation}
%is defined by the component paths 
\end{construction}

Note that two different $\gamma,\gamma'$ maps with the same endpoints may not induce the same simplicial map $\Cech_1(\gamma)$, $\Cech_1(\gamma')$. That is, monodromy information is lost in the associated simplicial map, as demonstrated in Figure \ref{fig:cor1example}.

\begin{figure}[h]\centering
\begin{tikzpicture}[
  scale=1.2,
	conpath/.style={-{Straight Barb[width=4pt,length=2pt]},shorten <=3pt, shorten >=3pt, line width=\bdwid},
	conpathb/.style={white,shorten <=3pt, shorten >=3pt,line width=\bdwidb},
	condisk/.style={uicblue,line width=\bdwid,fill=uicblue,fill opacity=.5}
]
% gamma 1
\foreach \x\y\n in {0/2/a1, .7/2/b1, 0/0/a2, .7/0/b2}{\coordinate (\n) at (\x,\y);}
\foreach \n in {a2,b2}{\draw[condisk] (\n) ellipse (.5 and .2);}
\draw[conpath] (a1)--(a2);
\draw[conpath,uicred] (b1)--(b2);
\foreach \n in {a1,b1}{\draw[condisk] (\n) ellipse (.5 and .2);}
\foreach \n\col in {a1/black, a2/black, b1/uicred, b2/uicred}{
  \fill[\col] (\n) circle (.055 and .025);
}
\node at (.35,-.5) {$\gamma_1$};
% gamma 2
\begin{scope}[shift={(3,0)}]
\foreach \x\y\n in {0/2/a1, .7/2/b1, 0/0/a2, .7/0/b2}{\coordinate (\n) at (\x,\y);}
\foreach \n in {a2,b2}{\draw[condisk] (\n) ellipse (.5 and .2);}
\draw[conpath,shorten <=0pt] (0,1)--(a2);
\draw[conpath, shorten <=0pt, uicred] (-.5,1) .. controls +(90:.3) and +(90:1.3) .. (b2);
\draw[conpathb, shorten >=0pt] (b1) .. controls +(270:1.3) and +(270:.3) .. (-.5,1);
\draw[conpath,-,shorten >=0pt, uicred] (b1) .. controls +(270:1.3) and +(270:.3) .. (-.5,1);
\draw[conpathb,shorten >=0pt,-] (a1)--(0,1);
\draw[conpath,shorten >=0pt,-] (a1)--(0,1);
\foreach \n in {a1,b1}{\draw[condisk] (\n) ellipse (.5 and .2);}
\foreach \n\col in {a1/black, a2/black, b1/uicred, b2/uicred}{
  \fill[\col] (\n) circle (.055 and .025);
}
\node at (.35,-.5) {$\gamma_2$};
\end{scope}
% gamma 3
\begin{scope}[shift={(6,0)}]
\foreach \x\y\n in {-.3/2/a1, 1/2/b1, 0/0/a2, .7/0/b2}{\coordinate (\n) at (\x,\y);}
\foreach \n in {a2,b2}{\draw[condisk] (\n) ellipse (.35 and .14);}
\draw[conpath] (a1)--(a2);
\draw[conpath, uicred] (b1)--(b2);
\foreach \n in {a1,b1}{\draw[condisk] (\n) ellipse (.5 and .2);}
\foreach \n\col in {a1/black, a2/black, b1/uicred, b2/uicred}{
  \fill[\col] (\n) circle (.055 and .025);
}
\node at (.5,-.5) {$\gamma_3$};
\end{scope}
% gamma 4
\begin{scope}[shift={(9,0)}]
\foreach \x\y\n in {-.3/2/a1, 1/2/b1, 0/0/a2, .7/0/b2}{\coordinate (\n) at (\x,\y);}
\foreach \n in {a2,b2}{\draw[condisk] (\n) ellipse (.35 and .14);}
\draw[conpath] (a1) .. controls +(280:1) and +(80:1) .. (b2);
\draw[conpathb, shorten >=5pt] (b1) .. controls +(260:1) and +(100:1) .. (a2);
\draw[conpath,uicred] (b1) .. controls +(260:1) and +(100:1) .. (a2);
\foreach \n in {a1,b1}{\draw[condisk] (\n) ellipse (.5 and .2);}
\foreach \n\col in {a1/black, a2/uicred, b1/uicred, b2/black}{
  \fill[\col] (\n) circle (.055 and .025);
}
\node at (.5,-.5) {$\gamma_3$};
\end{scope}
\end{tikzpicture}
\caption{The class $[\check C](\gamma_i(t,1-t))$ for $i=1,2$ is constant for all $t$, and for $i=3,4$ is constant only for $t\in (0,1]$. The simplicial maps associated to $\gamma_1$ and $\gamma_2$ are both the identity, while the map associated to $\gamma_3$ is different from the one associated to $\gamma_4$ (and neither are the identity).}
\label{fig:cor1example}
\end{figure}

%The claim from Construction \ref{constr:entrance-path} that the $\gamma_i$ induce a simplicial map relies on the following claim.

%\begin{lemma}\label{lem:simplex-map}
%If $\gamma_{i_0}(1,0) , \dots , \gamma_{i_\ell}(1,0)$ defines an $\ell$-simplex in $\check C(\gamma(1,0))$, then  $\gamma_{i_0}(0,1) , \dots , \gamma_{i_\ell}(0,1)$ defines an $m$-simplex in $\check C(\gamma(0,1))$, for some $0\leqslant m\leqslant \ell$.
%\end{lemma}
%\begin{proof}
%\end{proof}

\begin{remark}\label{rem:endpoints}
%With Lemma \ref{lem:simplex-map}, the claim from Construction \ref{constr:entrance-path} that the $\gamma_i$ induce a simplicial map is immediate. Indeed, if some subset of $\gamma_{i_0}(t,1-t) , \dots , \gamma_{i_\ell}(t,1-t)$ 
Recall from Section \ref{sec:scs} that a simplicial map $C\to C'$ is a map $V(C)\to V(C')$ which, when applied to elements of $S(C)$, gives elements of $S(C')$. The claim in Construction \ref{constr:entrance-path} that the $\gamma_i$ satisfy the conditions of a simplicial map follows by several observations:
\begin{itemize}
\item Any $\gamma_i(t,1-t)$ may coincide only for $t=0$, that is, at the end of the path.
\item An intersection $\bigcap_i \overline B(\gamma_i(t,1-t),r_t)$ that is non-empty for $t=1$ must be non-empty for all $t\in (0,1]$, else $\gamma$ would not be an entrance path. 
\item The only possibility of $\bigcap_i \overline B(\gamma_i(t,1-t),r_t)$ being non-empty for all $t\in (0,1]$ and empty for $t=0$ is if $r_0 = 0$, in which case all the $\gamma_i(0,1)$ coincide, which describes a surjective map from a simplex to a single vertex.
\item Since the balls $\overline B$ are closed, it is impossible to preserve simplicial complex isomorphism class by making one intersection empty at the same instant $t\in (0,1)$ as another is made non-empty.
\end{itemize}
Here we have used $r_t$ for the $\R_{\geqslant 0}$ component of $\gamma(t,1-t)$.

%\begin{itemize}
%\item If none of them coincide, then $\Cech_1(\gamma)$ is the identity up to renaming of the vertices.
%\item  If $\gamma_{i_0}(t,1-t) , \dots , \gamma_{i_\ell}(t,1-t)$ define an $\ell$-simplex for $t\in (0,1]$ and they all coincide at $t=0$, there is no issue, as mapping a simplex to a point is surjective on vertices.
%\item The case in between for which a subsimplex of a simplex is mapped to a point similarly causes no issues, as 
%\end{itemize}

% If $\gamma_{i_1}(1,0) , \dots , \gamma_{i_\ell}(1,0)$ define a simplex, there is no issue, as mapping a simplex to a point is surjective on vertices. If they do not define a simplex, then there is still no issue, 

%the $\R_{\geqslant 0}$ component of $\gamma(0,1)$ must be 0, in which case there is still no issue, as mapping $\ell$  vertices to a single vertex is surjective. Indeed, if the component is nonzero and the identified vertices do not span a simplex at $t=1$, there must be some $t'\in (0,1)$ at which %intersect intersect on every $t>0$, the restruced 
\end{remark}

Considering $\check C$ as $\Cech_0$ and with the construction above of $\Cech_1$, we are tempted to generalize the result further.

\begin{conjecture}\label{conj:cech-functor}
$\Cech_0$ and $\Cech_1$ extend to a map $\Cech\colon \Sing_{[\SC]}(\Ran(M)\times \R_{\geqslant 0}) \to N(\SCcat)$ of simplicial sets.
\end{conjecture}

Examples abound of $C,C'\in \SC$ with different simplicial maps $C\to C'$ that are surjective on vertices, but it is not immediate that it is possible to construct an entrance path into some $[\SC]$-stratified $\Ran(M)\times \R_{\geqslant 0}$ joining such simplicial maps. That is, we do not immediately find counterexamples to Conjecture \ref{conj:cech-functor}, so we hope it is true.

%Theorem \ref{thm:cech-continuous} and Construction \ref{constr:entrance-path} imply the following statements.

We conclude this section with some observations about paths.

\begin{remark}\label{rem:ph-structure}
Let $\gamma\colon |\Delta^1|\to \Ran(M)\times \R_{\geqslant 0}$ be a path and $\check \gamma = [\check C]\circ \gamma$. 
\begin{enumerate}
\item The subposet $\im(\check \gamma)\subseteq [\SC]$ corresponds to a zigzag of simplicial complexes and simplicial maps.
\label{thm:ph1}
\item If $\gamma$ is contained in $\Conf_n(M)$ and $\im(\check \gamma)$ is totally ordered by $\succcurlyeq$, then $\im(\check \gamma)$ is a filtration of a simplicial complex on $n$ points.
\item If $\gamma(1-t,t) = (P,t/(1-t))$, then $\check \gamma$ corresponds to the \v Cech filtration of $P\subseteq M$.
\end{enumerate}
\end{remark}

%Even though $[\SC]$ is isomorphism classes of simplicial complexes, Corollary \ref{cor:unique-maps} allows us to get unique simplicial maps in $\SC$ for particular paths. Concatenating such paths implies the statements of Corollary \ref{cor:ph-structure}, ultimately describing 
These observations describe $\Ran(M)\times \R_{\geqslant 0}$ as a topological space of simplicial complex filtrations, as illustrated in Figure \ref{fig:cor2example}.

\begin{figure}[h]\centering
\newcommand\figspacing{2.8}
\newcommand\figshift{1.7}
\newcommand\textshift{2.6}
\begin{tikzpicture}[
	conpath/.style={-{Straight Barb[width=4pt,length=2pt]},shorten <=3pt, shorten >=3pt, line width=\bdwid},
	conpathb/.style={white,shorten <=3pt, shorten >=3pt,line width=\bdwidb},
	condisk/.style={uicblue,line width=\bdwid,fill=uicblue,fill opacity=.5}
]
% t1
\foreach \x\y\n in {0/0/a, 0/1/b, .5/.5/c, 1/1/d, -.1/1.1/e}{
  \coordinate (\n) at (\x,\y);
  \draw[condisk] (\n) circle (.2);
  \fill[black] (\n) circle (.05);
}
% t2
\draw[uicred,line width=1pt,transform canvas={shift={(0,-\figshift)}}] (b)--(e);
\foreach \x\y in {0/0, 0/1, .5/.5, 1/1, -.1/1.1}{\fill[black] (\x,\y-\figshift) circle (.05);}
\node (t1) at (.5,-\textshift) {$\check C(\gamma(t_1))$};
\begin{scope}[shift={(\figspacing,0)}]
\foreach \x\y\n in {.2/.15/a, .1/.9/b, .45/.55/c, .85/.8/d}{
  \coordinate (\n) at (\x,\y);
  \draw[condisk] (\n) circle (.22);
  \fill[black] (\n) circle (.05);
}
% t3
\draw[uicred,line width=1pt,transform canvas={shift={(0,-\figshift)}}] (a)--(c)--(d);
\foreach \x\y in {.2/.15, .1/.9, .45/.55, .85/.8}{\fill[black] (\x,\y-\figshift) circle (.05);}
\node (t2) at (.5,-\textshift) {$\check C(\gamma(t_2))$};
\end{scope}
\begin{scope}[shift={(2*\figspacing,0)}]
\foreach \x\y\n in {.2/.15/a, .1/.9/b, .45/.55/c, .85/.8/d}{
  \coordinate (\n) at (\x,\y);
  \draw[condisk] (\n) circle (.35);
  \fill[black] (\n) circle (.05);
}
% t4
\draw[uicred,line width=1pt,transform canvas={shift={(0,-\figshift)}}] (a)--(c)--(d) (b)--(c);
\foreach \x\y in {.2/.15, .1/.9, .45/.55, .85/.8}{\fill[black] (\x,\y-\figshift) circle (.05);}
\node (t3) at (.5,-\textshift) {$\check C(\gamma(t_3))$};
\end{scope}
\begin{scope}[shift={(3*\figspacing,0)}]
\foreach \x\y\n in {.2/.15/a, .1/.9/b, .45/.55/c, .85/.8/d}{
  \coordinate (\n) at (\x,\y);
  \draw[condisk] (\n) circle (.4);
  \fill[black] (\n) circle (.05);
}
\fill[transform canvas={shift={(0,-\figshift)}},uicblue!50] (a)--(c)--(d)--(b);
\draw[transform canvas={shift={(0,-\figshift)}},uicred,line width=1pt] (a)--(b)--(c)--(a) (b)--(d)--(c);
\foreach \x\y in {.2/.15, .1/.9, .45/.55, .85/.8}{\fill[black] (\x,\y-\figshift) circle (.05);}
\node (t4) at (.5,-\textshift) {$\check C(\gamma(t_4))$}; 
\end{scope}
\begin{scope}[shift={(4*\figspacing,0)}]
\foreach \x\y\n in {.2/.15/a, .1/.9/b, .45/.55/c, .65/.6/d}{
  \coordinate (\n) at (\x,\y);
  \draw[condisk] (\n) circle (.4);
  \fill[black] (\n) circle (.05);
}
% t5
\fill[transform canvas={shift={(0,-\figshift)}},uicblue,opacity=.5] (a)--(b)--(d);
\fill[transform canvas={shift={(0,-\figshift)}},uicblue,opacity=.5] (a)--(b)--(d);
\draw[transform canvas={shift={(0,-\figshift)}},uicred,line width=1pt,] (a)--(b)--(c)--(a) (a)--(d) (b)--(d)--(c);
\foreach \x\y in {.2/.15, .1/.9, .45/.55, .65/.6}{\fill[black] (\x,\y-\figshift) circle (.05);} 
\node (t5) at (.5,-\textshift) {$\check C(\gamma(t_5))$};
\end{scope}
% connectors
\foreach \x\y in {t1/t2, t2/t3, t3/t4, t4/t5}{
  \draw[-{Straight Barb[width=6pt,length=3pt]},shorten <=5pt, shorten >=5pt] (\x)--(\y);
}
\end{tikzpicture}
\caption{A path in $\Ran(M)\times \R_{\geqslant 0}$ and its corresponding zigzag in $\SC$. Restricting to $[t_2,t_4]$ we have part of the \v Cech filtration on the $\Ran(M)$ component of $\gamma(t_2)$.}
\label{fig:cor2example}
\end{figure}

\section{Discussion}
\label{sec:discussions}

We have presented a thorough description of the space $\Ran(M)\times \R_{\geqslant 0}$, motivated by its interpretation as the space of all simplicial complexes on a metric space $M$. Our description gives a stratification $[\check C]$ based on the \v Cech construction of a simplicial complex on $M$. This stratification may be refined into a structurally cleaner but more opaque conical stratification (Theorem \ref{thm:conical-existence}), as well as a combinatorially motivated stratification, though it is unclear if the latter is conical (Remark \ref{rem:frontier-simps}). We use $[\check C]$ to associate paths with simplicial maps in Section \ref{sec:strat-paths}, relating them to existing constructions in persistent homology (Remark \ref{rem:ph-structure}) and conjecturing that the association extends to continuous maps of higher-dimensional simplices (Conjecture \ref{conj:cech-functor}).

This approach prompts questions about the new concepts we introduced:
\begin{itemize}
\item What does the poset of frontier simplicial complexes look like?
\item Is the $[\FSC]$-stratification of $\Ran(M)\times \R_{\geqslant 0}$ conical?
\end{itemize}

\noindent
We are also motivated to push further the inquiry into interpreting paths:
\begin{itemize}
%\item Can monodromy information be encoded be adjusting $\Cech_1$ without losing generality of working in isomorphism classes?
\item Does the \v Cech map and its generalization $\Cech_1$ to paths extend to higher-dimensional simplices?
%\item How does an entrance path look like
\end{itemize}

The choice of working with isomorphism classes of simplicial complexes, in which the vertices have no order, and simplicial sets (for the entrance paths and the nerve), in which the 0-simplices are ordered, does not make our work easier. An alternative approach would have been to take the nerve of the face poset of a simplicial complex, which is a simplicial set, instead of the simplicial complex itself. Part of the appeal of using isomorphism classes is that less information is remembered, hence it is not immediate that using simplicial sets would help.

\vspace{20pt}
\noindent
\footnotesize{\textbf{Acknowledgements}\ The author was partly sponsored by EP/P025072/1. Thanks to Ben Antieau and Shmuel Weinberger for insightful discussions and guidance. Thanks to Justin Curry for suggesting the counterexample of Figure \ref{fig:finite-counter}. Thanks to the reviewers for helpful comments and corrections.}

\bibliographystyle{plain}     
\bibliography{paper}  

\end{document}